\documentclass[11pt,reqno]{amsart}
\usepackage{amsmath,amssymb}
\usepackage{mathabx}
\usepackage{mathdots}
\usepackage{graphicx,caption,subfig}
\usepackage{color,soul}
\usepackage{hyperref}
\usepackage{enumerate}
\usepackage[colorinlistoftodos]{todonotes}
\usepackage{enumitem}
\usepackage[normalem]{ulem}
\usepackage{constants}

\usepackage{doi}

\newconstantfamily{N}{symbol=N}

\usepackage{xcolor}

\usepackage{fullpage}

\newtheorem{theorem}{Theorem}[section]

\newtheorem{definition}{Definition}[section]
\newtheorem{corollary}{Corollary}[section]
\newtheorem{proposition}{Proposition}[section]
\newtheorem{remark}{Remark}[section]

\usepackage{prettyref}
\newrefformat{def}{Definition \ref{#1}}
\newrefformat{thm}{Theorem \ref{#1}}
\newrefformat{prop}{Proposition \ref{#1}}
\newrefformat{lem}{Lemma \ref{#1}}
\newrefformat{cor}{Corollary \ref{#1}}
\newrefformat{sec}{Section \ref{#1}}
\newrefformat{sub}{Subsection \ref{#1}}
\newrefformat{rem}{Remark \ref{#1}}
\newrefformat{ex}{Example \ref{#1}}
\newrefformat{app}{Appendix \ref{#1}}
\newrefformat{subfig}{Fig. \ref{#1}}
\newrefformat{fig}{Fig. \ref{#1}}

\numberwithin{equation}{section}

\DeclareMathOperator{\Arg}{Arg}

\DeclareMathOperator{\diag}{diag}

\newcommand{\w}{\omega}
\newcommand{\ii}{\imath}

\newcommand{\CC}{\mathbb{C}}
\newcommand{\NN}{\mathbb{N}}
\newcommand{\RR}{\mathbb{R}}
\newcommand{\TT}{\mathbb{T}}

\renewcommand{\vec}[1]{\boldsymbol{\mathrm{#1}}}
\newcommand{\xv}{\mathcal{X}} 
 
\newcommand{\xiv}{\boldsymbol{\xi}}

\newcommand{\cv}{\vec{c}} 

\newcommand{\VV}{\mathbf{V}}


\newcommand{\GG}{\mathbf{G}}

\newcommand{\sep}{\Delta}
\newcommand{\dst}{d}
\newcommand{\Pcx}{P_{\cv,\xv}}

\newcommand{\exs}[1][\ell]{\mathfrak{I}_{#1}}

\newcommand{\SRF}{\operatorname{SRF}}
\setlength{\parskip}{\smallskipamount} \setlength{\parindent}{0pt}

\makeatletter
\DeclareRobustCommand*\cal{\@fontswitch\relax\mathcal}
\makeatother

\begin{document}
\title{Single-exponential bounds for the smallest singular value
of {V}andermonde matrices in the sub-Rayleigh regime}

\author[D.Batenkov]{Dmitry Batenkov} \address{Department of Applied
  Mathematics, School of Mathematical Sciences, Tel Aviv University,
  P.O. Box 39040, Tel Aviv 6997801, Israel}
\email{dbatenkov@tauex.tau.ac.il} \thanks{D.B. was supported by the
  ISRAEL SCIENCE FOUNDATION (grant No. 1793/20)}

\author[G. Goldman]{Gil Goldman} \address{Department of Mathematics,
  The Weizmann Institute of Science, Rehovot 76100, Israel}
\email{gil.goldman@weizmann.ac.il} \thanks{}

\subjclass[2010]{Primary 15A18, 65T40, 65F20.}  \keywords{Vandermonde matrices with nodes 
	on the unit circle, nonuniform Fourier matrices,
	sub-Rayleigh resolution, singular values, super-resolution, condition number. 
 }  \date{}

\begin{abstract}
  Following recent interest by the community, the scaling of the
  minimal singular value of a Vandermonde matrix with nodes forming
  clusters on the length scale of Rayleigh distance on the complex
  unit circle is studied. Using approximation theoretic properties of
  exponential sums, we show that the decay is only single exponential
  in the size of the largest cluster, and the bound holds for
  arbitrary small minimal separation distance. We also obtain a
  generalization of well-known bounds on the smallest eigenvalue of
  the generalized prolate matrix in the multi-cluster
  geometry. Finally, the results are extended to the entire spectrum.
\end{abstract}

\maketitle

\section{Introduction}\label{sec.intro}

For an ordered set of distinct nodes $\xv=\{ x_1,\ldots,x_s\}$ with
$x_j \in (-\pi,\pi]$, and $N \geq s-1$, consider the $(N+1) \times s$
Vandermonde matrix
\begin{equation} \VV_{N}(\xv) := \left[e^{\ii k x_j}\right]_{0
    \le k \le N}^{1 \le j \le s}.
\end{equation}

This class of matrices is the subject of numerous recent
investigations in the applied harmonic analysis community,
e.g. \cite{aubel_vandermonde_2017,batenkov2018a,batenkov2019,ferreira_superresolution_1999,kunis2018,kunis2019a,li_stable_2017,li2020,liao_music_2016,moitra_super-resolution_2015}. While
interesting in their own right, the spectral properties of $\VV_{N}$
are closely related to the problem of super-resolution (SR) under
sparsity constraints, which also received a lot of attention in recent
years
\cite{batenkov2019a,candes_towards_2014,demanet_recoverability_2014,donoho_superresolution_1992}. In
the SR context, the smallest singular value
$\sigma_{\min}(\VV_N):=\min_{\cv\in\CC^s,\|\cv\|_2=1}\|\VV_N \cv\|_2$
controls the limit of stable recovery of a superposition of Dirac
masses supported on $\xv$ from its first $N+1$ Fourier coefficients,
while the singular subspaces play a major role in various SR
algorithms (e.g. MUSIC and ESPRIT)
\cite{fannjiang_compressive_2016,li_stable_2017,li2020,liao_music_2016}.

Let $\Delta$ denote the minimal separation (in the wrap-around sense)
between the elements of $\xv$. With $s$ fixed, two distinct asymptotic
regimes are known:
\begin{enumerate}
\item When $N\Delta \gtrapprox O(1)$, the matrix $\VV_N$ is
  well-conditioned, and $\sigma_{\min}(\VV_N) = O(\sqrt{N})$.
\item When $N\Delta\ll 1$, $\sigma_{\min}(\VV_N)$ can be as small as $O\left(\sqrt{N}(N\Delta)^{s-1}\right)$.
\end{enumerate}

The well-conditioned case 1) has been studied in
\cite{aubel_vandermonde_2017,ingham_trigonometrical_1936,moitra_super-resolution_2015,montgomery_hilberts_1974,negreanu_discrete_2006},
by various tools from harmonic analysis and analytic number
theory. The separation condition $\Delta\gtrapprox {1\over N}$ plays a
major role in the analysis of the convex relaxations of the SR problem
\cite{candes_super-resolution_2013,candes_towards_2014}.

Case 2) corresponds to the so-called ``sub-Rayleigh'' regime,
where $N=2\pi\Delta^{-1}$ is precisely the Rayleigh resolution limit. The
possibility to resolve closely spaced point sources from low-frequency
measurements with arbitrary precision was already established by
G.~de~Prony in 1795 \cite{prony1795}\footnote{English translation of
  the original Prony's paper can be found in \cite{auton1981}.},
providing the symbolic-algebraic basis for many other reconstruction
algorithms that followed. However, without additional prior
information regarding the geometry of $\xv$, the sensitivity to noise
(``condition number'') of the SR problem in the sub-Rayleigh
regime may be as large as $\SRF^{2s-1}$, where $\SRF:=(N\Delta)^{-1}$
is the ``super-resolution factor''. This quickly becomes prohibitive
already for moderate values of $s$. The exponent $2s-1$
corresponds to the worst-case scenario where all the nodes of $\xv$
are clustered together and approximately equispaced, e.g.
$x_{j+1} = x_j + \Delta_j$ with $\Delta_j \approx \Delta$,
$j=1,\dots,s-1$.

When $\Delta_j=\Delta$ for all $1\leq j \leq s-1$, $\VV_{N}$ is a
contiguous submatrix of the DFT matrix (also known as the ``prolate
matrix'' in the literature \cite{varah_prolate_1993}), and the scaling of
$\sigma_{\min}(\VV_N)$, in the asymptotic regime $N\to\infty,\;N\Delta <c$
directly follows from the ``Bell Labs theory'' of the spectral
concentration problem \cite{slepian_prolate_1978} (see also
\cite{barnett2020}). See \prettyref{sec:comparison} for further
discussion.

Now suppose that an a-priori information is available, according to
which only a small number of nodes can be clustered, with the
different clusters separated by $\theta\gtrapprox {1\over N}$ (see
\prettyref{def:partial.cluster} below). It has been recently shown by
several groups that in this case,
$\sigma_{\min}(\VV_N)\asymp \sqrt{N}(N\Delta)^{\ell-1}$ where $\ell$
is the largest multiplicity of any such cluster. Accordingly, the SR
condition number will scale as $\SRF^{2\ell-1}$
\cite{batenkov2018a,batenkov2019,batenkov2019a,diederichs2019,kunis2018,kunis2019a,li_stable_2017}.

The scaling of the proportionality constants in the order estimates
above, in particular, their dependence on $\ell$ and $s$, are a
subject of ongoing research. This question is of importance in the
regime where $\ell \ll s$, so that the factor $\SRF^{2\ell-1}$ is
significantly smaller than $\SRF^{2s-1}$.

In this paper we prove a single-exponential in $\ell$ and linear in $s$ lower bound for $\sigma_{\min}(\VV_N(\xv))$ in the multi-cluster geometry (\prettyref{thm:main-theorem}), of the form
\begin{equation}\label{eq:main-lower-bound}
	\sigma_{\min}(\VV_N(\xv)) \geq c_1 \sqrt{N} (N\Delta/c_2)^{\ell-1},
\end{equation}
where $c_1,c_2$ are absolute constants, independent of
$\ell,s,N,\Delta$, (and in fact, $c_2=32\pi e$), holding whenever
\begin{equation}\label{eq:main-bound-condition}
	c_3(\ell) s/\theta \leq N \leq c_4(\ell)/(\tau\Delta s).
\end{equation}

Relative to prior works on the subject, in particular
\cite{kunis2019a,li_stable_2017} 
 (see \prettyref{sec:comparison}
below), our single-exponential in $\ell$ bound
\eqref{eq:main-lower-bound} holds for a fixed $N$ and \emph{all}
sufficiently small $\Delta$.  Applying a simple
limiting argument, in \prettyref{thm:prolate} we also generalize
Slepian's bound for the smallest eigenvalue of the prolate matrix (see
above) in the non-equispaced multi-cluster case.

The main technical contribution of this paper is a new method of proof
of the bound \eqref{eq:main-lower-bound} for a single cluster (Theorem
\ref{thm:single.cluster}), which was previously shown in this setting
in \cite[Example 4.8]{kunis2019a} (again, see details in
\prettyref{sec:comparison}). The proof is based on applying the
classical Turan's inequality for exponential sums and Salem’s
inequality to the analysis of stability of Vandermonde matrices with
nodes on the unit circle. The extension of this result to Vandermonde
matrices with multiple sets of clustered nodes separated by
$\theta \gtrsim {1\over N}$, is done by invoking our recent result
\cite[Theorem 2.2]{batenkov2019}, which, in turn, shows that the
column subspaces in $\CC^{N+1}$ corresponding to each cluster are
nearly orthogonal.

Our results can easily be extended to show single-exponential scaling
for \emph{all the singular values} of $\VV_N$ (resp. eigenvalues of
the prolate matrix). We present some details of these extensions in
\prettyref{sec:entire-spectrum}, however for the sake of brevity we do
not provide the full derivations.

\section{Main results}\label{sec:main.results}

\begin{definition}[Wrap-around distance]\label{def.warp.around}
  For $x,y \in \RR$, we define the wrap-around distance
  $$
  \dst(x,y):= \bigl|\Arg \exp{\imath (x-y)}\bigr| = \bigl|x-y \mod (-\pi,\pi]\bigr| \in \left[0,\pi\right],
  $$ 
  where for $z\in {\mathbb C} \backslash \{0\}$, $\Arg(z)$ is the
  principal value of the argument of $z$, taking values in
  $\left(-\pi,\pi\right]$.
\end{definition}

We denote by $\TT=\RR\mod (-\pi,\pi]$ the periodic interval of length
$2\pi$.

\begin{definition}[Single cluster configuration]\label{def:single.cluster}
  The node set $\xv=\left\{ x_1,\dots,x_\ell\right\} \subset (-\pi,\pi]$
  is said to form a $(\Delta,\ell,\tau)_{\TT}$-cluster, for some $\ell-1\leq\tau\le{\pi\over\Delta}$, if
    $$
    \forall x,y \in \xv, x\neq y: \quad \Delta \le \dst(x,y) \le \tau\Delta.
    $$

\end{definition}

Below we write $C_k(\ell)$, for some indexes $k=1,\ldots$, to indicate
constants that depend only on $\ell$.

Our first main result is the following.

\begin{theorem}\label{thm:single.cluster}
  Let $\xv$ form a $(\Delta,\ell,\tau)_{\TT}$-clustered configuration.  Then
  there exist a constant $\Cl{single.cluster.N}(\ell)$ and absolute
  constants $\Cl{single.cluster.lower}=32\pi e, \Cl{single.cluster.mult}$,
  such that for any $N$ satisfying
  $\Cr{single.cluster.N}(\ell)\le N \le \frac{2\pi}{\tau \Delta}$,
  \begin{equation}\label{eq:single-min-lb}
  \sigma_{\min}(\VV_N(\xv))) \ge
  \Cr{single.cluster.mult}\sqrt{N}\biggl(\frac{N
    \Delta}{\Cr{single.cluster.lower}}\biggr)^{\ell -1}.
  \end{equation}
\end{theorem}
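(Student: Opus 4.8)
The plan is to obtain the lower bound on $\sigma_{\min}(\VV_N(\xv))$ by exhibiting, for an arbitrary unit vector $\cv\in\CC^\ell$, a single frequency $k\in\{0,1,\dots,N\}$ at which the exponential sum $p(x):=\sum_{j=1}^\ell c_j e^{\ii k x_j}$ has magnitude bounded below by the right-hand side of \eqref{eq:single-min-lb}; since $\|\VV_N\cv\|_2^2 = \sum_{k=0}^N |p_k(\cv)|^2 \ge |p_{k_0}(\cv)|^2$ for that chosen $k_0$, this suffices. Equivalently, one works with the trigonometric/exponential polynomial $q(t) = \sum_{j=1}^\ell c_j e^{\ii t x_j}$ in the continuous variable $t$, observes that $\VV_N\cv$ samples $q$ at the integers $t=0,1,\dots,N$, and then transfers lower bounds on $q$ as a function on a continuous interval to lower bounds on its samples. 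The two classical ingredients flagged in the introduction are: \emph{Tur\'an's inequality} for exponential sums, which says that an exponential sum with $\ell$ terms and frequencies in a set of diameter $\le \tau\Delta$ cannot be uniformly small on an interval of length $\gg 1/(\tau\Delta) \sim N$ relative to its value on a much shorter interval — quantitatively, $\max_{[0,N]}|q| \ge (c/(N\Delta))^{\ell-1}\,\text{(something like }\max_{[0,1]}|q|\text{ or }\|\cv\|_\infty)$ — and \emph{Salem's inequality} (a Nikolskii-type / sampling inequality), which lets one compare the discrete $\ell^2$-norm $\|\VV_N\cv\|_2$ (sampling $q$ at integers) with the continuous sup norm $\max_{[0,N]}|q|$, losing only a factor $\sqrt{N}$ and an absolute constant, provided the bandwidth of $q$ (governed by $\max_j |x_j|$, i.e. by $\tau\Delta$ and the localization of the cluster) times $N$ stays bounded — this is exactly where the hypothesis $N\le 2\pi/(\tau\Delta)$ is used.

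Concretely I would proceed as follows. \textbf{Step 1 (normalization).} Translate the cluster so it is centered near $0$ and rescale: set $N\Delta=:\delta$, so $\delta\le 2\pi/\tau$ and the rescaled nodes $y_j := N x_j$ lie in an interval of length $\le \tau\delta \le 2\pi$; the sum $q(t)=\sum_j c_j e^{\ii t x_j}$ becomes, after $t\mapsto Nu$, a sum with frequencies $y_j$ and we sample at $u\in\{0,1/N,2/N,\dots,1\}$. \textbf{Step 2 (lower bound on the continuous sup-norm via Tur\'an).} Apply Tur\'an's inequality on the interval $[0,1]$ (or $[0,N]$ in original variables) to the exponential sum with $\ell$ terms whose frequency set has diameter $\le\tau\Delta$: this yields $\max_{u\in[0,1]}|q| \ge (c_0\,(N\Delta))^{\ell-1}\cdot M$, where $M$ is a suitable norm of the coefficient vector that is $\gtrsim \|\cv\|_2$ up to a loss that must be controlled (the standard route is Tur\'an's first main theorem combined with the fact that some coefficient has modulus $\ge \|\cv\|_2/\sqrt{\ell}$, or a second-difference / divided-difference argument giving a clean bound; constants of the form $32\pi e$ suggest an explicit chain of such estimates). \textbf{Step 3 (pass from continuous to discrete via Salem).} Invoke Salem's inequality to bound $\max_{[0,N]}|q|$ by $C\,N^{-1/2}\|\VV_N\cv\|_2$ — valid because $q$, viewed on $[0,N]$, is a bandlimited function whose band is contained in an interval of length $\le \tau\Delta \le 2\pi/N$, i.e. one full period, so the integer samples on $[0,N]$ faithfully represent it. Combining Steps 2 and 3 gives $\|\VV_N\cv\|_2 \ge c_1\sqrt{N}\,(N\Delta/c_2)^{\ell-1}\|\cv\|_2$ with $c_2=32\pi e$, and since $\cv$ was arbitrary this is the claimed bound on $\sigma_{\min}$. \textbf{Step 4 (the lower threshold $\Cr{single.cluster.N}(\ell)$).} The constraint $N\ge C_1(\ell)$ enters because Tur\'an's inequality on $[0,N]$ with $\ell$ terms needs the interval to be "long enough" — it must contain enough of a period of the highest-frequency beat, which for $\ell$ terms forces $N$ to exceed some $\ell$-dependent constant (roughly, one needs $N\Delta \gtrsim \ell$-type room, or at least $N$ bigger than an absolute multiple of $\ell$); tracking this yields $\Cr{single.cluster.N}(\ell)$.

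The main obstacle, and where the care has to go, is \textbf{Step 2}: getting a \emph{clean, absolute-constant} version of Tur\'an's inequality in the exponent $\ell-1$ while simultaneously not losing more than an $\ell$-independent factor in the coefficient norm. The naive application of Tur\'an's lemma compares the sup-norm on a long interval with the sup-norm on a short one, but here we need to compare with $\|\cv\|_2$ directly; bridging that gap — e.g. by first lower-bounding $\max_{[0,\epsilon]}|q|$ in terms of $\|\cv\|_2$ using that the confluent Vandermonde / derivative data of $q$ at a point determines $\cv$, then applying Tur\'an to blow this up to $[0,N]$ — is the delicate part, and it is presumably how the specific constant $32\pi e$ (product of a Tur\'an constant, a Salem constant, and a Vandermonde-inversion constant, each explicit) arises. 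A secondary, more technical nuisance is verifying the Salem/sampling step under the exact stated hypothesis $N\le 2\pi/(\tau\Delta)$, i.e. checking the bandwidth bookkeeping after the centering translation in Step 1 (the translation changes the frequencies by a common shift, which only multiplies $q$ by a unimodular exponential in $t$ and thus does not affect $|\VV_N\cv|$, but one must make sure the shifted frequencies still fit in one period).
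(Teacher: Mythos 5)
Your high-level plan---use Tur\'an to produce the $(N\Delta/c)^{\ell-1}$ factor, bring $\|\cv\|_2$ into the estimate via Salem, then pass from the continuum to the integer samples---shares ingredients with the paper, but two of your three steps are organized incorrectly and the genuinely nontrivial step is missing. Salem's inequality here is not a sampling result on $[0,N]$: it is the $L^2$ statement $\|\cv\|_2^2 \lesssim \|P_{\cv,\xv}\|_{L^2(I)}^2$, valid only when $I$ is long relative to the reciprocal minimal separation, i.e. $\mu(I)\gtrsim 1/\Delta$. The paper applies it on the \emph{long} interval $I=[0,4\pi/\Delta]$, and then Tur\'an (bridged by a Nikolskii $L^\infty\!\to L^2$ estimate) \emph{shrinks} from that long interval down to $[0,N]$, giving $\|P\|_{L^2([0,N])}\gtrsim \ell^{-1}(N\Delta/16\pi e)^{\ell-1}\|P\|_{L^2([0,4\pi/\Delta])}$. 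This dissolves the obstacle you flagged as ``the delicate part'' (lower-bounding a sup norm directly by $\|\cv\|_2$ on a short interval): going to the long interval first makes the Salem step clean and absolute-constant, and the residual $1/\ell$ loss from Nikolskii is absorbed by worsening $16\pi e$ to $32\pi e$ via $2^{\ell-1}\ge\ell$.

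The real gap is the discrete-to-continuous step. It is not handled by Salem nor by a Nyquist-style ``one period fits'' heuristic. The paper forms $T=|Q|^2$ (an exponential sum of degree at most $\ell^2-\ell+1$ with frequencies $O(\tau N\Delta)$), invokes a Bernstein-type inequality to bound $\|T'\|_{L^\infty([0,1])}$, compares $\frac1N\sum_{k=0}^N T(k/N)$ with $\int_0^1 T$ via a Riemann-sum error estimate, and uses a further Nikolskii inequality to control $\|T\|_{L^\infty}$ by $\|T\|_{L^1}$. The $\ell$-dependent lower threshold on $N$ is exactly what makes the Riemann-sum error at most half the integral; it has nothing to do with Tur\'an ``needing a long interval'' (Tur\'an's lemma imposes no such requirement), so your Step 4 explanation of the threshold is also wrong. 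Without this Bernstein/Riemann-sum argument your proposal has no route from $\|P\|_{L^2([0,N])}$ to the discrete quantity $\bigl(\sum_{k=0}^N |P(k)|^2\bigr)^{1/2}=\|\VV_N\cv\|_2$, and it does not close.
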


\begin{definition}[Multi-cluster configuration, periodic interval]\label{def:partial.cluster}
  The node set $\xv=\left\{ x_1,\dots,x_s\right\} \subset (-\pi,\pi]$
  is said to form a $\left(\Delta,\theta,s,\ell,\tau\right)_{\TT}$-clustered
  configuration for some $\Delta>0$, $1\leq \ell\leq s$,
  $\ell-1\leq\tau\le{\pi\over\Delta}$ and $\theta > 0$, if for each
  $x_j$, there exist at most $\ell$ distinct nodes
  $$
  \xv^{(j)}=\{x_{j,k}\}_{k=1,\dots,r_j} \subset \xv,\;1\leq
  r_j\leq\ell,\quad x_{j,1}\equiv x_j,
  $$
  such that the following conditions are satisfied:
  \begin{enumerate}
  \item For any $y\in\xv^{(j)}\setminus\{x_j\}$, we have
    $$
    	\Delta\leq d(y,x_j) \leq \tau \Delta.
    $$
  \item For any $y\in\xv \setminus\xv^{(j)}$, we have
    $$
    d(y,x_j) \geq \theta. 
    $$
  \end{enumerate}
\end{definition}

\begin{theorem}\label{thm:main-theorem}
  There exist constants $\Cl{multi.cluster.N.theta}(\ell)$, $\Cl{multi.cluster.N.delta}(\ell)$ and absolute 
  constants $\Cl{multi.cluster.lower.in}=32\pi e, \Cl{multi.cluster.mult}$, 
  such that for any $\xv$ forming a
  $\left(\Delta,\theta,s,\ell,\tau\right)_{\TT}$-clustered configuration and
  $N$ satisfying  $\frac{\Cr{multi.cluster.N.theta}s}{\theta} \le N \le
  \frac{\Cr{multi.cluster.N.delta}}{s\tau\Delta}$,
  \begin{align}
      \label{eq.multi.cluster.bound}
      \sigma_{\min}\left(\VV_N\left(\xv\right)\right) &\geq
			\Cr{multi.cluster.mult}\sqrt{N} \left(\frac{N \Delta}{\Cr{multi.cluster.lower.in}} \right)^{\ell-1}.
  \end{align}
\end{theorem}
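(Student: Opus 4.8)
The plan is to deduce \prettyref{thm:main-theorem} from the single-cluster estimate \prettyref{thm:single.cluster} by a decoupling argument: split $\xv$ into well-separated clusters, bound $\sigma_{\min}$ on each cluster by \prettyref{thm:single.cluster}, and glue the resulting blocks together using the near-orthogonality of the per-cluster column subspaces proved in \cite[Theorem~2.2]{batenkov2019}. The conceptual heart of the argument thus lies in that external decoupling input; what this proof contributes is the quantitative matching of the three sets of hypotheses involved.

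\emph{Cluster decomposition.} First I would promote the hypothesis of \prettyref{def:partial.cluster} to an honest partition. The two inequalities $\frac{\Cr{multi.cluster.N.theta}s}{\theta}\le N\le\frac{\Cr{multi.cluster.N.delta}}{s\tau\Delta}$ combine to give $\tau\Delta\le\frac{\Cr{multi.cluster.N.delta}}{\Cr{multi.cluster.N.theta}}\cdot\frac{\theta}{s^{2}}$, so choosing $\Cr{multi.cluster.N.theta}(\ell)$ large enough relative to $\Cr{multi.cluster.N.delta}(\ell)$ forces $2\tau\Delta<\theta$. Condition~(2) of the definition then identifies $\xv^{(j)}$ exactly with $\{\,y\in\xv:\ d(y,x_j)\le\tau\Delta\,\}$, and a short triangle-inequality computation (using $2\tau\Delta<\theta$ together with condition~(2)) shows that the relation $x\sim y\iff d(x,y)\le\tau\Delta$ is transitive, hence an equivalence relation on $\xv$. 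Its classes $\xv=\bigsqcup_{i=1}^{m}\xv_i$ satisfy: each $\xv_i$ is a genuine $(\Delta,\ell_i,\tau)_{\TT}$-cluster with $1\le\ell_i\le\ell$; $\sum_i\ell_i=s$, so $m\le s$; and $d(\xv_i,\xv_{i'})\ge\theta$ whenever $i\ne i'$.

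\emph{Per-cluster bound and gluing.} Up to a permutation of columns (which leaves singular values unchanged), $\VV_N(\xv)=[\,\VV_N(\xv_1)\mid\cdots\mid\VV_N(\xv_m)\,]$. For each block I would invoke \prettyref{thm:single.cluster}: its hypothesis $\Cr{single.cluster.N}(\ell_i)\le N\le\frac{2\pi}{\tau\Delta}$ holds because $N\le\frac{\Cr{multi.cluster.N.delta}}{s\tau\Delta}\le\frac{2\pi}{\tau\Delta}$ once $\Cr{multi.cluster.N.delta}(\ell)\le2\pi$, and $N\ge\frac{\Cr{multi.cluster.N.theta}s}{\theta}\ge\frac{\Cr{multi.cluster.N.theta}}{\pi}\ge\Cr{single.cluster.N}(\ell)$ once $\Cr{multi.cluster.N.theta}(\ell)\ge\pi\,\Cr{single.cluster.N}(\ell)$ (we may assume $\theta\le\pi$ and take $\Cr{single.cluster.N}$ nondecreasing in $\ell$). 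Since $N\Delta/\Cr{single.cluster.lower}=N\Delta/(32\pi e)\le1$ in this range and $\ell_i\le\ell$, the theorem gives $\sigma_{\min}(\VV_N(\xv_i))\ge\Cr{single.cluster.mult}\sqrt N\,(N\Delta/\Cr{single.cluster.lower})^{\ell_i-1}\ge\Cr{single.cluster.mult}\sqrt N\,(N\Delta/\Cr{single.cluster.lower})^{\ell-1}$ for every $i$. To pass from this block-wise estimate to $\VV_N(\xv)$ itself I would apply \cite[Theorem~2.2]{batenkov2019}: because the $\xv_i$ are pairwise $\theta$-separated and $N\theta\ge\Cr{multi.cluster.N.theta}(\ell)\,s$, the column spaces of the $\VV_N(\xv_i)$ in $\CC^{N+1}$ are mutually nearly orthogonal, whence $\sigma_{\min}(\VV_N(\xv))\ge c\,\min_i\sigma_{\min}(\VV_N(\xv_i))$ for an absolute constant $c$. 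Combining, absorbing $c$ into $\Cr{multi.cluster.mult}$ and keeping $\Cr{multi.cluster.lower.in}=\Cr{single.cluster.lower}=32\pi e$, yields \eqref{eq.multi.cluster.bound}; the degenerate case $\ell=1$ (all clusters singletons, $\sigma_{\min}(\VV_N(\xv_i))=\sqrt{N+1}$) is already the $\ell_i=1$ instance of \prettyref{thm:single.cluster}.

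\emph{Main obstacle.} The one substantive external ingredient is the decoupling of well-separated clusters from \cite[Theorem~2.2]{batenkov2019}; everything else is bookkeeping, but that bookkeeping is the crux. One must choose $\Cr{multi.cluster.N.theta}(\ell)$ and $\Cr{multi.cluster.N.delta}(\ell)$ so that the single two-sided window on $N$ simultaneously (i) produces the partition above, (ii) meets the hypotheses of \prettyref{thm:single.cluster} for every sub-cluster, and (iii) meets the separation hypothesis of \cite[Theorem~2.2]{batenkov2019} — whose requirement typically degrades with both the cluster diameters $\tau\Delta$ and the number of clusters, which is precisely why a factor $s$ (rather than an absolute constant) must appear on both ends of the admissible range — all while forcing the outer constant to remain the same $32\pi e$ as in the single-cluster bound and keeping the dependence on $s$ exactly linear.
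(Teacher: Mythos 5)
Your proposal is correct and follows essentially the same route as the paper: partition $\xv$ into $\theta$-separated sub-clusters, bound each block by \prettyref{thm:single.cluster} (using $N\Delta/(32\pi e)\le 1$ and $\ell_i\le\ell$ to replace the per-cluster exponent $\ell_i-1$ by $\ell-1$), and then glue via \cite[Theorem 2.2]{batenkov2019}, which the paper isolates as \prettyref{prop:singvals-from-laa} and which gives precisely $\sigma_{\min}(\VV_N(\xv))\ge\frac12\min_j\sigma_{\min}(\VV_N(\mathcal{C}^{(j)}))$ under \eqref{eq:orth.condition}; the constant-matching ($\theta\le\pi$, $s\ge1$, take the max/min of the various thresholds) is identical. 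The only cosmetic difference is that the paper simply asserts the existence of the cluster partition, whereas you derive it from transitivity of $d(x,y)\le\tau\Delta$ under $2\tau\Delta<\theta$ — a small gap the paper leaves implicit that your argument cleanly fills.
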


\begin{definition}[Generalized prolate matrix]\label{def:prolate}
	Let $\xv=\left\{ x_1,\dots,x_s\right\}\subset\RR$ be a collection of $s$	pairwise distinct points on the real line. We define the generalized prolate matrix as follows:
	$$
	\GG(\xv)=\biggl[ \frac{1}{2}\int_{-1}^1 e^{\imath \omega
		(x_j-x_k)}\biggr]_{j,k=1}^s \in \RR^{s\times s}.
	$$
\end{definition}

Note that $\GG(\xv)$ is symmetric and positive definite (see e.g. \cite[Proposition 2.6]{batenkov2018a}).

In a manner completely analogous to \prettyref{def:single.cluster}, we define the notion of a clustered configuration appropriate for this setting.

\begin{definition}[Multi-cluster configuration, real line]\label{def:partial.cluster.prolate}
  The node set $\xv=\left\{ x_1,\dots,x_s\right\} \subset \RR$ is said
  to form a $\left(\Delta,\theta,s,\ell,\tau\right)_{\RR}$-clustered
  configuration for some $\Delta>0$, $1\leq \ell\leq s$,
  $\ell-1\leq\tau$ and $\theta > 0$, if for each
  $x_j$, there exist at most $\ell$ distinct nodes
  $$
  \xv^{(j)}=\{x_{j,k}\}_{k=1,\dots,r_j} \subset \xv,\;1\leq
  r_j\leq\ell,\quad x_{j,1}\equiv x_j,
  $$
  such that the following conditions are satisfied:
  \begin{enumerate}
  \item For any $y\in\xv^{(j)}\setminus\{x_j\}$, we have
    $$
    	\Delta\leq \left|y-x_j\right| \leq \tau \Delta.
    $$
  \item For any $y\in\xv \setminus\xv^{(j)}$, we have
    $$
    \left|y-x_j\right| \geq \theta. 
    $$
  \end{enumerate}
\end{definition}

The next theorem is a direct corollary of
\prettyref{thm:main-theorem}, and it should be compared to
\eqref{eq:slepian-bound}.

\begin{theorem}\label{thm:prolate}
  There exist absolute constants $\Cl{multi.cluster.prolate.lower}$,
  $\Cl{multi.cluster.lower.O}=16\pi e$ and constants
  $\Cl{multi.cluster.O.theta}(\ell)$,
  $\Cl{multi.cluster.O.delta}(\ell)$, such that for any $\xv$ forming
  a $\left(\Delta,\theta,s,\ell,\tau\right)_{\RR}$-clustered
  configuration with $\theta \geq \Cr{multi.cluster.O.theta} s$ and
  $s\tau\Delta \leq \Cr{multi.cluster.O.delta}$,
  \begin{align}
      \label{eq.multi.cluster.bound.prolate}
      \lambda_{\min}\left(\GG\left(\xv\right)\right) &\geq
			\Cr{multi.cluster.prolate.lower}\left(\frac{\Delta}{\Cr{multi.cluster.lower.O}} \right)^{2(\ell-1)}.
  \end{align}
\end{theorem}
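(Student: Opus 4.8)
The plan is to derive \prettyref{thm:prolate} from \prettyref{thm:main-theorem} by a rescaling-and-limiting argument, exploiting the fact that the generalized prolate matrix $\GG(\xv)$ arises as a continuum limit of normalized Gram matrices of Vandermonde columns. First I would observe that for a node set $\xv = \{x_1,\dots,x_s\} \subset \RR$ and any bandwidth parameter, the matrix $\frac{1}{N+1}\VV_N(\xv_h)^* \VV_N(\xv_h)$ — where $\xv_h := h\xv$ is a rescaled copy of $\xv$ living on $\TT$ — has $(j,k)$ entry $\frac{1}{N+1}\sum_{n=0}^{N} e^{\ii n h(x_j-x_k)}$, which is a Riemann sum for $\frac{1}{2}\int_{-1}^{1} e^{\ii \omega(x_j - x_k)}\,\dt\omega$ once we set $h = \frac{2}{N}$ (so that $nh$ ranges over a mesh of $[0,2]$, and after recentering, of $[-1,1]$). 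Hence as $N \to \infty$ with $h = 2/N$, we get $\frac{1}{N+1}\VV_N(h\xv)^*\VV_N(h\xv) \to \GG(\xv)$ entrywise, and therefore $\frac{1}{N+1}\sigma_{\min}^2(\VV_N(h\xv)) \to \lambda_{\min}(\GG(\xv))$.

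The second step is to check that the rescaled node set $\xv_h = h\xv = \frac{2}{N}\xv$ forms a $(\Delta_h, \theta_h, s, \ell, \tau)_{\TT}$-clustered configuration with $\Delta_h = \frac{2}{N}\Delta$ and $\theta_h = \frac{2}{N}\theta$, at least for $N$ large enough that all the relevant wrap-around distances coincide with ordinary distances (which holds once $N$ is large since $\tau\Delta s$ is fixed and bounded). Then I would apply \prettyref{thm:main-theorem} to $\xv_h$ with this $N$: its hypothesis $\frac{\Cr{multi.cluster.N.theta} s}{\theta_h} \le N \le \frac{\Cr{multi.cluster.N.delta}}{s\tau\Delta_h}$ becomes, after substituting $\theta_h = 2\theta/N$ and $\Delta_h = 2\Delta/N$, the two conditions $\Cr{multi.cluster.N.theta} s \le 2\theta$ and $2 s\tau\Delta \le \Cr{multi.cluster.N.delta}$ — which are exactly conditions of the form $\theta \ge \Cr{multi.cluster.O.theta}(\ell) s$ and $s\tau\Delta \le \Cr{multi.cluster.O.delta}(\ell)$ with $\Cr{multi.cluster.O.theta} = \Cr{multi.cluster.N.theta}/2$ and $\Cr{multi.cluster.O.delta} = \Cr{multi.cluster.N.delta}/2$, and crucially they are independent of $N$. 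So once those hold, \prettyref{thm:main-theorem} gives $\sigma_{\min}(\VV_N(\xv_h)) \ge \Cr{multi.cluster.mult}\sqrt{N}\bigl(\frac{N\Delta_h}{\Cr{multi.cluster.lower.in}}\bigr)^{\ell-1} = \Cr{multi.cluster.mult}\sqrt{N}\bigl(\frac{2\Delta}{\Cr{multi.cluster.lower.in}}\bigr)^{\ell-1}$ for all sufficiently large $N$, the right-hand side now being $N$-independent apart from the $\sqrt{N}$ prefactor.

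The final step is to pass to the limit: squaring and dividing by $N+1$,
\begin{equation*}
  \frac{\sigma_{\min}^2(\VV_N(\xv_h))}{N+1} \ge \frac{N}{N+1}\,\Cr{multi.cluster.mult}^2\left(\frac{2\Delta}{\Cr{multi.cluster.lower.in}}\right)^{2(\ell-1)},
\end{equation*}
and letting $N\to\infty$ along $h = 2/N$ yields $\lambda_{\min}(\GG(\xv)) \ge \Cr{multi.cluster.mult}^2\bigl(\frac{2\Delta}{\Cr{multi.cluster.lower.in}}\bigr)^{2(\ell-1)} = \Cr{multi.cluster.mult}^2\bigl(\frac{\Delta}{\Cr{multi.cluster.lower.in}/2}\bigr)^{2(\ell-1)}$, which is the claimed bound with $\Cr{multi.cluster.prolate.lower} = \Cr{multi.cluster.mult}^2$ and $\Cr{multi.cluster.lower.O} = \Cr{multi.cluster.lower.in}/2 = 16\pi e$, matching the stated constant.

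I expect the main obstacle to be purely technical bookkeeping rather than conceptual: one must be careful that (i) the Riemann-sum convergence is genuinely entrywise and hence forces convergence of $\lambda_{\min}$ (continuity of eigenvalues under entrywise convergence of a fixed-size symmetric matrix sequence — this is elementary but should be stated), and (ii) the clustering structure of $\xv$ on $\RR$ really does transfer to the wrap-around clustering structure of $h\xv$ on $\TT$ for all large $N$, including the subtle point that distinct nodes of $\xv$ stay distinct and non-antipodal mod $2\pi$ after scaling by $2/N$ (immediate for large $N$), and that the "at most $\ell$ in a cluster, rest at distance $\ge\theta_h$" dichotomy is preserved verbatim. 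Neither of these is hard, but the recentering of the mesh from $[0,2]$ to $[-1,1]$ (equivalently, multiplying $\VV_N(h\xv)$ on the left by the unitary diagonal $\diag(e^{-\ii N h x_j/2})$, which does not change singular values) is the one place where a sign or factor-of-two slip would propagate into the final constant, so I would carry that computation out explicitly to confirm $\Cr{multi.cluster.lower.O} = 16\pi e$.
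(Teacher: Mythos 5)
Your proposal is correct and uses essentially the same rescaling-and-limiting argument as the paper: shrink the nodes toward the origin at rate $1/N$, identify the normalized Vandermonde Gram matrix as a Riemann-sum approximation of $\GG(\xv)$, note that the hypotheses of \prettyref{thm:main-theorem} become $N$-independent under this rescaling, and pass to the limit using continuity of eigenvalues. The paper's bookkeeping differs cosmetically (it uses $\widetilde{\VV}_N(\xv/N)$ with symmetric index $k=-N,\dots,N$ factored as $\tfrac{1}{\sqrt{2N}}\VV_{2N}(\xv/N)\,\diag(e^{-\ii N\xi_j})$, versus your $\tfrac{1}{\sqrt{N+1}}\VV_N(2\xv/N)$; also note your diagonal phase matrix is $s\times s$ and so must multiply $\VV_N$ on the right, not the left), but both routes yield the same constants $\Cr{multi.cluster.prolate.lower}=\Cr{multi.cluster.mult}^2$ and $\Cr{multi.cluster.lower.O}=16\pi e$.
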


\begin{remark}
  Frequently the definition of the prolate matrix contains an
  additional bandwidth parameter $\Omega>0$, so that the inner
  products are considered in an interval $[-\Omega,\Omega]$
  \cite{batenkov2018a}. In the present paper we do not lose any
  generality by rescaling $\Omega$ to 1.
\end{remark}

\section{Prior art}\label{sec:comparison}

In this section only, $c,c_1,\dots,c',\dots,C,\dots$ denote generic
constants which might be different in different formulas, and which do
not depend on $N,\Delta$.

Let us start with the setting $\ell=s$. Recalling \prettyref{def:prolate},
we have, as $N\to\infty$, that
$$
(2N)^{-1}\sigma_{\min}^2 \left( \VV_N(\xv /N) \right) \to \lambda_{\min}
\left(\GG(\xv)\right).$$
In the equispaced setting $x_{j+1} = x_j + \Delta$,
$j=1,\dots,s-1$,
the matrix $\GG$ is precisely the ``prolate matrix''
\cite{slepian_prolate_1978,	varah_prolate_1993}, and it holds that
\begin{equation}\label{eq:slepian-bound}
	\begin{split}
		\lambda_{\min}(\GG(\xv)) &= C_{EQ}(s) \Delta^{2s-2}\left\{ 1 + O(\Delta) \right\}, \quad \Delta \ll 1;\\
		C_{EQ}(s)&:=\frac{2^{2 s-2}}{(2 s-1){2s-2 \choose s-1}^{3}}
		\asymp_s \left(1\over 4\right)^{2s-2}.
	\end{split}
\end{equation}

Here $\asymp_s$ means ``up to polynomial in $s$ and $1/s$
factors''. This gives
$$
\sigma_{\min}(\VV_N) \asymp_s \sqrt{N} \biggl(\frac{N\Delta}{4}
\biggr)^{s-1}\left\{1+O(\sqrt{N\Delta})\right\}.
$$

Non-asymptotic bounds for the case of node configurations $\xv$ with minimal
separation of at least $\Delta$ are available as well. An explicit construction
in \cite[Proposition 3]{li_stable_2017} (following
\cite{donoho_superresolution_1992}) gives
$$
\min_{\xv}\sigma_{\min}(\VV_N(\xv)) \lessapprox_s \sqrt{N}\biggl(\frac{N\Delta}{2}\biggr)^{s-1},\quad N\Delta < \frac{2\pi}{{C}_{LL}(s)\sqrt{N}},\quad {C}_{LL}(s) =  2\pi \sum_{j=0}^{s-1} {s-1 \choose j} \frac{j^s}{s!}.
$$

For a single cluster setting $\ell=s$, \prettyref{thm:single.cluster}
has been proven in \cite[Example 4.8]{kunis2019a} with the better
constant $\Cr{single.cluster.lower}=4\pi e$ (in the earlier work
\cite{li_stable_2017} this constant was not explicit). The reduction
in the tightness of constant in our work might be related to the fact
that our constant is also valid for all the singular values, see
\prettyref{thm:single.cluster.all} below.

Turning to the more general case $\ell\leq s$ and cluster separation
$\theta$ (as in \prettyref{def:partial.cluster}), in
\cite{li_stable_2017} and later \cite[Corollary 4.2]{kunis2019a} it
was shown that
$$
\sigma_{\min}(\VV_N(\xv)) \geq \frac{5}{9} \sqrt{N} \biggl(\frac{N\Delta}{4\pi e}\biggr)^{\ell-1},\quad N\Delta < 1.
$$
However, the above holds under the condition $N\theta >
c_{\gamma} (N\Delta)^{-\gamma}$ with any $\gamma>0$ and $c_\gamma \to
\infty$ as $\gamma\to 0$. For fixed $N,\theta$, this prevents
$\Delta\to 0$ in order for the bound to continue to hold. In contrast,
assuming $N\theta > c'$ and $N\Delta < c''$ with $c',c''$ depending
only on $s,\ell$, it was shown in \cite[Theorem 2.3, Corollary 2.1]{batenkov2019}
that for all $\xv$ satisfying the clustering geometric assumptions, we have
\begin{equation}\label{eq:orth.paper.bound}
	C'\sqrt{N} (N\Delta)^{\ell-1} \leq \sigma_{\min}(\VV_N(\xv)) \leq \frac{1}{2}\sqrt{N\ell e}  (\tau N\Delta)^{\ell-1}.
\end{equation}
Here $\tau \geq (\ell-1)$ is a uniformity parameter, controlling the
overall extent of any cluster (see
\prettyref{def:partial.cluster}). However, the constant $C'$ was not
explicit. In \cite{batenkov2018a} the same scaling for the lower bound
was shown with $C'=\frac{\sqrt{\pi/2}}{(s\sqrt{2\pi})^{2s-1}}$, albeit
under the additional assumption that $x_j \in \frac{\pi}{2s^2}(-1,1]$
for all $j=1,\dots, s$.

More details on the above developments are available in \cite[Section
1.4]{batenkov2019}, \cite[Examples 4.7,4.8]{kunis2019a}, \cite[Remarks
3.5,3.7]{batenkov2018a} and \cite[Remark 4]{li_stable_2017}.

Our single-exponential in $\ell$ bound of \prettyref{thm:main-theorem}, which
holds for a fixed $N$ and \emph{all} sufficiently small
$\Delta$, thus provides an improvement upon the above mentioned results (except, possibly, for the value of the absolute constant $\Cr{multi.cluster.lower.in}$, see \prettyref{sec:numerics}).

\section{Numerical experiments}\label{sec:numerics}

In this section we estimate the exponential dependence of $\sigma_{\min}(\VV_N(\xv))$ on $\ell$ numerically, by computing
$$
\Lambda(\xv,N):= \sigma_{\min}(\VV_N(\xv)/(\sqrt{N}(N\Delta)^{\ell-1}).
$$

Varying $\ell,\tau$ and $N,\Delta$ fixed, we expect that
$$
(1/c_2)^{\ell-1} \lessapprox \Lambda(\xv,N) \lessapprox (1/c_2')^{\ell-1}.
$$ 

Our theoretical results indicate that the above holds with $c_2 \leq 16\pi e$  and $c_2' \geq {1/\tau}$ (see resp. \eqref{eq:last-with-16pe} and \eqref{eq:orth.paper.bound}).

As can be seen from \prettyref{fig:single-cluster-nd-vary}, both the upper and lower bounds are correct, although the corresponding constants $16\pi e$ and $1/\tau$ are not tight.

\begin{figure}
	\centering
	\includegraphics[width=0.8\linewidth]{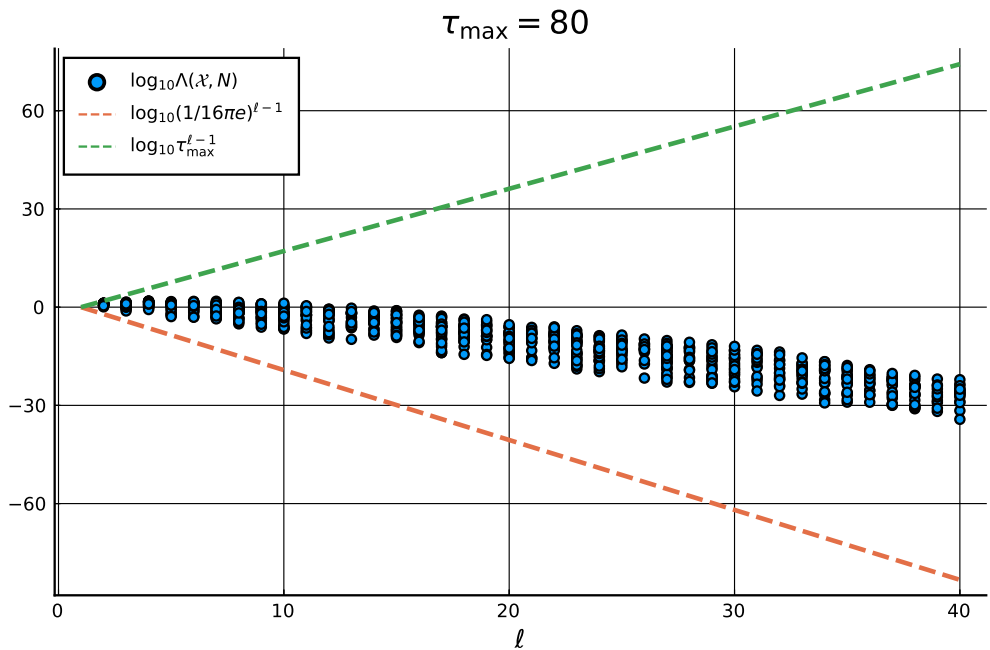}
	\caption{Single cluster - dependence of $\sigma_{\min}$ on
		$\ell$. We plot $\ell$ vs
		$\log_{10}\Lambda(\xv,N)$. $N$ is varying between 60 and 300, $\Delta$ is varying between $10^{-25}$ and $10^{-4}$, while $\ell$ varies from 2 to 40 and $\tau$ varies from $\ell-1$ to $\tau_{\max}=80$. The lower and upper bounds are shown as dashed lines.}
	\label{fig:single-cluster-nd-vary}
\end{figure}

All numerical tests were performed in arbitrary precision arithmetic.

\section{Discussion}

It is an interesting open question whether a bound of the type
\eqref{eq:main-lower-bound} should hold in the multi-cluster geometry,
for $N\theta > c', N\Delta < c''$ where $c',c''$ do not depend on $s$
and with no essential further restrictions on $\xv$. If this is the
case, then it is plausible that the super-resolution problem for a
practically infinite spike train ($s \gg 1$) with small sub-Rayleigh
clusters (a model analogous to Donoho's Rayleigh regular measures,
\cite{donoho_superresolution_1992}) can be essentially decoupled into
treating each cluster separately.

There is a room for further refinement regarding the bounds
themselves, as there is a relatively large gap in the
constants between the upper bound in \eqref{eq:orth.paper.bound} and
\eqref{eq:main-lower-bound}.

\section{Proofs}

\subsection{Preliminaries on exponential sums}
We review some preliminary results about exponential sums and their implications to the problem at hand.

\begin{definition}
	Given a vector $\cv\in\CC^\ell$ and $\xv=\{ x_1,\ldots,x_\ell\}\subset \RR$, we define the {\it exponential sum}
	$$
	P(t) = P_{\cv,\xv}(t):=\sum_{j=1}^\ell c_j e^{\imath t x_j}.
	$$
	The number of nonzero $c_j$'s is called the \emph{degree} of $P$. The set of all exponential sums of degree at most $\ell$ is denoted by $\exs$.
\end{definition}

\begin{remark}
	Our definition of exponential sums covers the case of purely imaginary exponents only to simplify the presentation. More general results for arbitrary complex exponents are available in e.g. \cite{nazarov_local_1994, erdelyi2017}.
\end{remark}

We denote by $\mu$ the Lebesgue measure on $\RR$.

%
Given an interval $I$ and a (complex valued) continuous function $f\in C(I)$,  $1 \le p \le \infty$, we denote
$$
\|f\|_{L^p(I)}:=
\begin{cases}
	\biggl(\frac{1}{\mu(I)}\int_{I} |f(t)|^p dt\biggr)^{1/p},& 1 \le p < \infty;\\
	\sup_{t\in I} |f(t)|,& p=\infty.
\end{cases}
$$

Exponential sums satisfy many classical inequalities from
approximation theory. In particular, we have the following estimates.

\begin{proposition}[Turan's inequality]
	Let $P\in\exs$, and let $\Omega \subset I$ be intervals with positive Lebesgue measure. Then
	$$
	\|P\|_{L^{\infty}(I)} \leqslant\left(\frac{4e \mu(I)}{\mu(\Omega)}\right)^{\ell-1} \|P\|_{L^{\infty}(\Omega)}.
	$$
\end{proposition}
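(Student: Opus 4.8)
The plan is to reduce the multivariate-looking statement to the classical Turán lemma for ordinary (Dirichlet-type) polynomials by a change of variables, and then to upgrade from the single-interval, equal-length statement typically found in the literature to the nested-intervals form stated here. First I would fix $P=P_{\cv,\xv}\in\exs$ with $\xv=\{x_1,\dots,x_\ell\}$ and note that, after translating $I$ and $\Omega$ by a common shift (which changes $P$ only by a unimodular factor $e^{\imath t_0 x_j}$ absorbed into the $c_j$), we may assume $I=[-R,R]$ for some $R>0$ and $\Omega\subset I$. The function $t\mapsto P(t)$ is an entire function of exponential type, being a finite linear combination of the characters $e^{\imath t x_j}$; equivalently, writing $z=e^{\imath t \delta}$ for a suitable scaling one sees it behaves like a generalized Dirichlet polynomial with $\ell$ terms. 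The key classical input is Turán's lemma (see Nazarov's work \cite{nazarov_local_1994}, or \cite{erdelyi2017}): for an exponential sum $P$ of degree $\le\ell$ and any two intervals $\Omega\subset I$,
$$
\|P\|_{L^\infty(I)} \le \left(\frac{A\,\mu(I)}{\mu(\Omega)}\right)^{\ell-1}\|P\|_{L^\infty(\Omega)},
$$
with an absolute constant $A$; the content here is that $A$ can be taken to be $4e$.

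The main step I would carry out is therefore to invoke the sharp form of Turán's inequality with the constant $4e$. This is exactly the statement proved by Nazarov \cite{nazarov_local_1994} (and reproved with explicit constants in \cite{erdelyi2017}) for exponential sums with arbitrary complex frequencies; the purely imaginary frequencies $\imath x_j$ here are a special case, so no separate argument is needed. Concretely, one writes the Remez-Turán-type estimate on $[-R,R]$ versus $\Omega$: the extremal configuration is governed by the behavior of $\ell$-term exponential sums, and the Chebyshev-like extremal analysis yields the factor $(4e\,\mu(I)/\mu(\Omega))^{\ell-1}$. Since the proposition is stated as a citation-backed preliminary rather than a new result, the "proof" is essentially: reduce to the standard normalization by translation/dilation invariance of the class $\exs$ and of the $L^\infty$ norms involved, then quote \cite[the relevant theorem]{nazarov_local_1994}.

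The part that requires the most care is tracking that the reduction preserves the constant exactly and that the hypotheses match: one must check that translation by $t_0$ keeps $P$ in $\exs$ (clear, since $c_j e^{\imath t_0 x_j}$ is again an admissible coefficient) and that no dilation is actually needed because Turán's inequality in the cited form is already scale-invariant in $\mu(I)/\mu(\Omega)$. A secondary point is that the cited results are often stated for real-valued or for trigonometric polynomials with integer frequencies; here the $x_j$ are arbitrary reals, so I would make sure to cite the version valid for arbitrary (complex) exponents — this is precisely why the remark immediately preceding the proposition flags \cite{nazarov_local_1994, erdelyi2017}. With that citation in hand there is no genuine obstacle; the statement follows immediately.
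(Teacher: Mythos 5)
Your proposal is correct and matches the paper's approach: the paper's own ``proof'' of this proposition is exactly a citation to Nazarov's review of Tur\'an's lemma (p.~7 of \cite{nazarov_local_1994}) together with Tur\'an's original paper, and you correctly identify it as a citation-backed preliminary with the constant $4e$ coming from that source. The extra remarks you give about translation invariance and scale-invariance of the ratio $\mu(I)/\mu(\Omega)$ are sound but not required, since the cited statement already applies directly to the nested-interval form used here.
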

\begin{proof}
  See a review on Turan's lemma on p.7 of \cite{nazarov_local_1994} (and Turan's original result 
  \cite{turan1953neue}, in German).
  
\end{proof}

\begin{proposition}[Nikolskii-type inequality]\label{prop:nikolskii}
	Let $P\in\exs$, then
	$$
	\|P\|_{L^{p}[0,1]} \leqslant\left(\frac{\pi \ell}{2}\right)^{2 / q-2 / p}\|P\|_{L^{q}[0,1]}, \quad 0<q<p \leqslant \infty, \quad q \leqslant 2.
	$$
\end{proposition}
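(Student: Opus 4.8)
The plan is to reduce the statement to the single classical case $q = 2$, $p = \infty$, and then bootstrap to general $q \le 2 < \infty \le p$ by elementary interpolation/monotonicity arguments on the normalized $L^p$-norms. First I would recall that for a trigonometric-type polynomial the crucial estimate is $\|P\|_{L^\infty[0,1]} \le C_\ell \|P\|_{L^2[0,1]}$ with $C_\ell = \pi\ell/2$; this is the Nikolskii inequality for exponential sums of degree $\le \ell$, and it follows from the fact that $\exs$ is a $\le\ell$-dimensional space spanned by characters $e^{\imath t x_j}$, so that $P$ lies in a reproducing-kernel structure whose kernel has $L^2$-norm controlled by $\sqrt{\ell}$ — more precisely, one writes $P(t_0) = \langle P, K_{t_0}\rangle$ for the reproducing kernel $K_{t_0}$ of the span inside $L^2[0,1]$, and bounds $\|K_{t_0}\|_{L^2}$ uniformly in $t_0$. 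The constant $\pi\ell/2$ is the known sharp-order value; I would cite the relevant reference (e.g.\ the review in \cite{nazarov_local_1994} or \cite{erdelyi2017}) rather than reprove sharpness.

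The second step handles the reduction. Fix $P \in \exs$ and set $M = \|P\|_{L^\infty[0,1]}$. For any finite $p \ge 2$, the normalized norm satisfies $\|P\|_{L^p[0,1]} \le M^{1-2/p}\|P\|_{L^2[0,1]}^{2/p}$ by the trivial pointwise bound $|P(t)|^p = |P(t)|^{p-2}|P(t)|^2 \le M^{p-2}|P(t)|^2$ and integrating over $[0,1]$ (whose measure is $1$, so no normalization constant appears). Feeding in the case-$q{=}2$ estimate $M \le (\pi\ell/2)\|P\|_{L^2[0,1]}$ gives
\begin{equation}
\|P\|_{L^p[0,1]} \le \left(\frac{\pi\ell}{2}\right)^{1-2/p}\|P\|_{L^2[0,1]} = \left(\frac{\pi\ell}{2}\right)^{2/2 - 2/p}\|P\|_{L^2[0,1]},
\end{equation}
which is the claimed inequality for $q = 2$; the case $p = \infty$ is exactly the base case. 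It remains to push $q$ below $2$. Here I would use Hölder's inequality in the reverse direction: for $0 < q < 2$, writing $|P|^2 = |P|^q \cdot |P|^{2-q}$ and applying Hölder with exponents $2/q$ and $2/(2-q)$ together with the already-established bound $\|P\|_{L^2} \le (\pi\ell/2)\|P\|_{L^\infty}^{(2-q)/2}\cdot(\text{something})$ — more cleanly, interpolate the log-convex family $p \mapsto \log\|P\|_{L^p[0,1]}$: since $\|\cdot\|_{L^p[0,1]}$ with the normalized measure is nondecreasing in $p$, one has $\|P\|_{L^q} \le \|P\|_{L^2}$ for $q \le 2$, and then apply the $q=2$ result. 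Combining, $\|P\|_{L^p} \le (\pi\ell/2)^{1 - 2/p}\|P\|_{L^2} \le (\pi\ell/2)^{1-2/p}\|P\|_{L^q}^{?}$ — one must be careful that the exponent matches $2/q - 2/p$ rather than $1 - 2/p$, which forces the use of the sharper two-sided chain rather than mere monotonicity.

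The main obstacle is precisely that last exponent-bookkeeping: naive monotonicity of normalized $L^p$-norms gives the correct qualitative inequality but the wrong (too small) power of $\pi\ell/2$, so I cannot simply chain $\|P\|_{L^q} \le \|P\|_{L^2}$. Instead the honest route is to prove directly that $\|P\|_{L^\infty} \le (\pi\ell/2)^{2/q}\|P\|_{L^q[0,1]}$ for every $q \le 2$ — this is the genuine content — and then deduce the general $q<p$ case by the pointwise/Hölder splitting $|P|^p \le \|P\|_{L^\infty}^{p-q}|P|^q$ once more. Establishing $\|P\|_{L^\infty} \le (\pi\ell/2)^{2/q}\|P\|_{L^q}$ from the $q=2$ case is itself a short interpolation: raise the $q=2$ estimate to a power and absorb, using $\|P\|_{L^2}^2 \le \|P\|_{L^\infty}^{2-q}\|P\|_{L^q}^q$. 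Tracking constants through this absorption (solving the resulting inequality $M^2 \le (\pi\ell/2)^2 M^{2-q}\|P\|_{L^q}^q$ for $M$) yields exactly $M \le (\pi\ell/2)^{2/q}\|P\|_{L^q}$, and the proof concludes. I would present this absorption step carefully since it is the only nonroutine point; everything else is citation plus Hölder.
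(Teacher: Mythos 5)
The paper's own ``proof'' of this proposition is a pure citation to \cite{erdelyi2017} (Theorem 2.5 there). Your proposal instead reduces the full two-parameter family $(q,p)$ to the single base case $\|P\|_{L^\infty[0,1]} \le (\pi\ell/2)\|P\|_{L^2[0,1]}$, and the bootstrapping you arrive at in your final paragraph is correct: setting $M := \|P\|_{L^\infty[0,1]}$, integrating $|P|^2 \le M^{2-q}|P|^q$ over $[0,1]$ gives $\|P\|_{L^2}^2 \le M^{2-q}\|P\|_{L^q}^q$; combining with $M^2 \le (\pi\ell/2)^2 \|P\|_{L^2}^2$ and solving for $M$ gives $M \le (\pi\ell/2)^{2/q}\|P\|_{L^q}$; and one more splitting $|P|^p \le M^{p-q}|P|^q$ yields $\|P\|_{L^p} \le M^{1-q/p}\|P\|_{L^q}^{q/p} \le (\pi\ell/2)^{2/q-2/p}\|P\|_{L^q}$. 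So the exponent $2/q-2/p$ comes out exactly right, and this is a legitimate alternative to citing the entire parametrized family, provided the $(q,p)=(2,\infty)$ case is taken as given. The middle of your writeup, where you flirt with mere monotonicity of the normalized $L^p$-norms, is a dead end that you correctly abandon; only the last absorption argument does the job.

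The one genuine problem is the justification you sketch for the base case, which is circular and should simply be deleted. The reproducing kernel $K_{t_0}$ of $\operatorname{span}\{e^{\imath x_j t}\}_{j\le\ell}$ inside $L^2[0,1]$ satisfies $\|K_{t_0}\|_{L^2}^2 = \sum_{j}|\phi_j(t_0)|^2$ for any orthonormal basis $\{\phi_j\}$, and bounding this quantity uniformly in $t_0$ \emph{and} uniformly over all node configurations $\xv$ of cardinality $\ell$ is precisely the $L^\infty$-vs-$L^2$ Nikolskii inequality you are trying to establish: to bound $|\phi_j(t_0)|$ for a unit-norm exponential sum $\phi_j$ is the same problem again. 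An abstract $\ell$-dimensional subspace of $L^2[0,1]$ has no bound on $\|K_{t_0}\|_{L^2}$ in terms of $\ell$ alone, and the $\sqrt{\ell}$ you assert would in any case be inconsistent with the target constant $\pi\ell/2$. The base case really is a theorem about the special structure of exponential sums (Chebyshev/Markov-type comparison arguments for M\"untz-like systems, as in Erd\'elyi's paper and the survey in \cite{nazarov_local_1994}), not a corollary of finite-dimensionality, so it must be cited outright --- which, to be fair, you ultimately say you would do.
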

\begin{proof}
	This is Theorem 2.5 in \cite{erdelyi2017}.
\end{proof}

Applying the above with $I=\left[0,\frac{4\pi}{\Delta}\right]$, $\Omega=\left[0,N\right]$, $p=\infty$ and $q=2$ yields the following.

\begin{corollary}\label{cor.turan}
	Let $\xv$ form an $(\Delta, \ell, \tau)_{\TT}$-clustered configuration
	and let  $N \leq \frac{4\pi}{\Delta}$, then for any $\cv \in \CC^\ell$
	\begin{equation}\label{eq:cor.turan}
		\|P_{\cv,\xv}\|_{L^2\left(\left[0,N\right]\right)} \geq \frac{2}{\pi \ell} \biggl(\frac{N\Delta}{16\pi e}\biggr)^{\ell-1} 
		\|P_{\cv,\xv}\|_{L^2\left(\left[0,\frac{4\pi}{\Delta}\right]\right)}.
	\end{equation}
\end{corollary}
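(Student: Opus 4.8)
The plan is to combine Turan's inequality and the Nikolskii-type inequality via a rescaling argument. First I would apply Turan's inequality with the intervals $\Omega = [0,N] \subset I = [0, 4\pi/\Delta]$; the hypothesis $N \le 4\pi/\Delta$ guarantees this inclusion. This yields
\begin{equation*}
	\|P_{\cv,\xv}\|_{L^\infty([0,4\pi/\Delta])} \le \left(\frac{4e \cdot (4\pi/\Delta)}{N}\right)^{\ell-1} \|P_{\cv,\xv}\|_{L^\infty([0,N])} = \left(\frac{16\pi e}{N\Delta}\right)^{\ell-1}\|P_{\cv,\xv}\|_{L^\infty([0,N])}.
\end{equation*}
This is the direction we want after we reconcile it with the normalized $L^2$ norms: the $L^\infty$ norm on the big interval dominates, and we want to lower-bound the (normalized) $L^2$ norm on the small interval by the (normalized) $L^2$ norm on the big interval.

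Next I would handle the passage between $L^2$ and $L^\infty$. On the small interval $[0,N]$ we trivially have $\|P\|_{L^2([0,N])} \le \|P\|_{L^\infty([0,N])}$ because of the normalization (averaging) built into the definition of the $L^p$ norm. On the big interval $[0,4\pi/\Delta]$ we need the reverse-type control $\|P\|_{L^\infty} \le (\pi\ell/2)\,\|P\|_{L^2}$, which is exactly the Nikolskii-type inequality of Proposition~\ref{prop:nikolskii} with $q=2$, $p=\infty$, giving the factor $(\pi\ell/2)^{2/2 - 0} = \pi\ell/2$. The one subtlety is that Proposition~\ref{prop:nikolskii} is stated on the interval $[0,1]$, so I would first rescale: if $P_{\cv,\xv} \in \exs$ then $t \mapsto P_{\cv,\xv}(\lambda t)$ is again in $\exs$ (it equals $P_{\cv, \lambda\xv}$), and the normalized $L^p$ norms are invariant under the affine change of variables mapping $[0,1]$ onto $[0,4\pi/\Delta]$. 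Hence the Nikolskii inequality transfers verbatim to $[0,4\pi/\Delta]$.

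Chaining these three estimates gives
\begin{equation*}
	\|P_{\cv,\xv}\|_{L^2([0,4\pi/\Delta])} \le \frac{\pi\ell}{2}\,\|P_{\cv,\xv}\|_{L^\infty([0,4\pi/\Delta])} \le \frac{\pi\ell}{2}\left(\frac{16\pi e}{N\Delta}\right)^{\ell-1}\|P_{\cv,\xv}\|_{L^\infty([0,N])} \le \frac{\pi\ell}{2}\left(\frac{16\pi e}{N\Delta}\right)^{\ell-1}\|P_{\cv,\xv}\|_{L^2([0,N])}.
\end{equation*}
Rearranging this inequality produces exactly \eqref{eq:cor.turan}. I do not expect any genuine obstacle here — the only things to be careful about are (i) tracking the scale factors so that the Turan ratio $\mu(I)/\mu(\Omega)$ really is $4\pi/(N\Delta)$, and (ii) making sure the $L^p$-norm normalizations are used consistently, since it is precisely the averaging in the definition that makes $\|P\|_{L^2(J)} \le \|P\|_{L^\infty(J)}$ hold on every interval $J$. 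The clustering hypothesis $(\Delta,\ell,\tau)_{\TT}$ is used only implicitly, via $P_{\cv,\xv} \in \exs$ (degree $\le \ell$) and to make the eventual Vandermonde application meaningful; for the corollary itself all that is needed is $P \in \exs$ and $N \le 4\pi/\Delta$.
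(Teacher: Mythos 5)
Your high-level plan is exactly the one the paper uses — chain the trivial $L^2 \le L^\infty$ inequality, Turán's inequality, and the Nikolskii inequality — but you have swapped the roles of the trivial inequality and the Nikolskii inequality, and the written chain contains a step that is false as stated. To go from $\|P\|_{L^2([0,4\pi/\Delta])}$ to $\|P\|_{L^2([0,N])}$ through the $L^\infty$ intermediaries, you need the trivial bound $\|P\|_{L^2} \le \|P\|_{L^\infty}$ on the \emph{big} interval $[0,4\pi/\Delta]$ and the Nikolskii inequality $\|P\|_{L^\infty} \le \frac{\pi\ell}{2}\|P\|_{L^2}$ on the \emph{small} interval $[0,N]$. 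Your text assigns them the other way around, and the chain you display reflects this: the first step
$\|P\|_{L^2([0,4\pi/\Delta])} \le \frac{\pi\ell}{2}\|P\|_{L^\infty([0,4\pi/\Delta])}$
is true, but only because it is the trivial inequality weakened by the gratuitous factor $\frac{\pi\ell}{2}\ge 1$ — it is not what Nikolskii gives, which is the reverse $L^\infty \le \frac{\pi\ell}{2} L^2$; and the final step
$\|P\|_{L^\infty([0,N])} \le \|P\|_{L^2([0,N])}$
is simply false (the normalization makes $L^2 \le L^\infty$, never the opposite). The end result comes out numerically right only because the missing $\frac{\pi\ell}{2}$ factor in the last step was accidentally inserted in the first step, so the two errors cancel — but the chain as written is not a valid proof.

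The correct chain, which is precisely the paper's, is
\begin{equation*}
\|P_{\cv,\xv}\|_{L^2\bigl(\bigl[0,\frac{4\pi}{\Delta}\bigr]\bigr)} \le \|P_{\cv,\xv}\|_{L^{\infty}\bigl(\bigl[0,\frac{4\pi}{\Delta}\bigr]\bigr)} \le \biggl(\frac{16\pi e}{N\Delta}\biggr)^{\ell-1} \|P_{\cv,\xv}\|_{L^{\infty}([0,N])} \le \frac{\pi\ell}{2}\biggl(\frac{16\pi e}{N\Delta}\biggr)^{\ell-1}\|P_{\cv,\xv}\|_{L^2([0,N])},
\end{equation*}
with the trivial inequality in the first step (no $\frac{\pi\ell}{2}$), Turán in the second, and Nikolskii (rescaled to $[0,N]$, as you correctly observed one may do) in the third. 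Your observation about rescaling Nikolskii from $[0,1]$ to an arbitrary interval and your identification of the Turán constant are both correct; just flip which interval gets the trivial bound and which gets Nikolskii.
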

\begin{proof}
	Indeed, we have
	\begin{align*}
		\|P_{\cv,\xv}\|_{L^2\left(\left[0,\frac{4\pi}{\Delta}\right]\right)} &\leq \|P_{\cv,\xv}\|_{L^{\infty}\left(\left[0,\frac{4\pi}{\Delta}\right]\right)} 
		\leq \biggl(\frac{4e\cdot 4\pi}{N\Delta}\biggr)^{\ell-1} \|P_{\cv,\xv}\|_{L^{\infty}[0,N]} 
		\leq \frac{\pi\ell}{2}\biggl(\frac{16\pi e}{N\Delta}\biggr)^{\ell-1}\|P_{\cv,\xv}\|_{L^2[0,N]}.
	\end{align*}
\end{proof}

Now consider an exponential sum $\Pcx$ where $\Delta$ is the minimal
separation of the nodes in $\xv$.  The next result states that for
intervals $I$ with length of the order of $\frac{1}{\sep}$ or more,
the coefficients norm $\|\cv\|_{2}$ and $\|\Pcx\|_{L^2(I)}$ are
related by an absolute constant. This is contrary to the case when the
length of $I$ is smaller than $\frac{1}{\sep}$, in which case the
constant will depend on $\sep \cdot \mu(I)$ and $\ell$, as we will
show below.
\begin{proposition}\label{prop.salem}
	Let $\xv$ form an $(\Delta,\ell,\tau)_{\TT}$-clustered configuration and let $\cv \in \CC^\ell$. Then, there
	exists an absolute constant $\Cl{salem}$ such that 
	$$
	\|P_{\cv,\xv}\|^2_{L^2\left(\left[0,\frac{4\pi}{\Delta}\right]\right)} \geq \Cr{salem}\|\cv\|_2^2.
	$$
\end{proposition}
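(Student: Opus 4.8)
The plan is to reduce the estimate to the classical Ingham-type lower bound for exponential sums whose frequencies satisfy a minimal separation condition. Recall that the nodes of an $(\Delta,\ell,\tau)_{\TT}$-cluster satisfy $d(x_i,x_j)\geq\Delta$ for $i\neq j$. The key observation is that on the interval $I=\left[0,\tfrac{4\pi}{\Delta}\right]$ of length $\tfrac{4\pi}{\Delta}$, the relevant "Fourier-analytic" scale is exactly $\tfrac{1}{\Delta}$, so this is the regime where the exponential sum behaves like an $\ell_2$-stable system: the frequencies $\{x_j\}$ are $\Delta$-separated while the window has length a fixed multiple of $\Delta^{-1}$, hence the Gram matrix of the functions $t\mapsto e^{\imath t x_j}$ restricted to $I$ is uniformly well-conditioned.

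First I would write $\|P_{\cv,\xv}\|_{L^2(I)}^2 = \cv^{*}\mathbf{G}\cv$ where $\mathbf{G}=[\,\overline{g(x_j-x_k)}\,]_{j,k}$ with $g$ the normalized integral kernel $g(\xi)=\tfrac{1}{\mu(I)}\int_I e^{\imath t\xi}\,dt$; note $g(0)=1$ and $g$ decays like $|\xi\,\mu(I)|^{-1}$ off the diagonal. Then it suffices to show $\lambda_{\min}(\mathbf{G})\geq \Cr{salem}$. The standard route is a Gershgorin/Schur-test argument: bound $\sum_{k\neq j}|g(x_j-x_k)|$ uniformly in $j$ using the $\Delta$-separation of the nodes. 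However, naive Gershgorin fails because $\sum_{m\geq 1}\tfrac{1}{m\Delta\cdot\mu(I)}$ diverges logarithmically; the clean fix — and the reason the proposition is attributed to Salem — is to invoke Salem's inequality (the sharp refinement of Ingham's inequality), which states that for $\delta$-separated real frequencies and an interval of length $L>\tfrac{2\pi}{\delta}$ (or, in the appropriate scaling, $L\gtrsim \delta^{-1}$) one has $\int_0^L|\sum_j c_j e^{\imath t x_j}|^2\,dt \geq c\,L\,\|\cv\|_2^2$ with an absolute constant. With $\mu(I)=\tfrac{4\pi}{\Delta}$ and separation $\Delta$ we have $\mu(I)\cdot\Delta = 4\pi > 2\pi$, so the hypothesis of Salem's inequality is met with room to spare, giving $\|P_{\cv,\xv}\|_{L^2(I)}^2 \geq \Cr{salem}\|\cv\|_2^2$ directly. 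An alternative, fully self-contained argument replaces Salem's inequality by a positive-definite minorant trick (Selberg/Beurling majorant, or simply convolving with a Fejér-type kernel supported so that its Fourier transform is $\geq 1$ on $[-\Delta/2,\Delta/2]$ and the off-diagonal kernel sums are controlled), which also yields an explicit absolute constant.

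The main obstacle I anticipate is getting an \emph{absolute} constant $\Cr{salem}$ — one that does not degrade with $\ell$ or with the cluster extent $\tau$. This is precisely where one cannot use Turán's inequality or any estimate whose constant is exponential in $\ell$; the whole point of isolating this proposition is that the window length $\tfrac{4\pi}{\Delta}$ is large enough (relative to the minimal gap $\Delta$, and \emph{independently} of how spread out or how numerous the nodes are, since $\tau\Delta\leq\pi$ keeps them in a bounded arc) for the stability constant to be dimension-free. I would therefore be careful to phrase the separation-based bound so that only $\Delta$ and $\mu(I)$ enter, never $\ell$ or $\tau$; Salem's inequality delivers exactly this, which is why it is the natural tool. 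The remaining steps — verifying the kernel decay estimate and assembling the Schur test — are routine once the right inequality is in hand.
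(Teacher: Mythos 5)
Your proposal is correct and follows the same route as the paper, which likewise deduces the claim directly from the classical Salem/Ingham inequality (cited from Zygmund, Vol.~I, Ch.~V, Th.~9.1): since the nodes are $\Delta$-separated on the line and the window $\left[0,\tfrac{4\pi}{\Delta}\right]$ has length exceeding $2\pi/\Delta$, the normalized $L^2$ norm dominates $\|\cv\|_2^2$ with an absolute constant. Your extra discussion of the Gram-matrix/Schur-test and Selberg-majorant alternatives is apt background but not needed; the core step matches the paper exactly.
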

\begin{proof}
	It directly follows from \cite[Vol.I, Chapter V,
	Th. 9.1]{zygmund1959} \footnote{Nazarov calls this type of
		inequality Salem's Inequality, see \cite[page
		8]{nazarov_local_1994}.}  stating that for an interval $I$ such
	that $\mu(I)=\frac{2\pi(1+\delta)}{\sep}$, $\delta>0$, there exists
	an absolute constant $C$ such that
	$$\|\cv\|_2^2 \le C(1-\delta^{-1})\|\Pcx\|^2_{L^2(I)}.$$   	
\end{proof}

Finally we require the following Bernstein type inequality bounding
the maximum absolute value of the derivative of an exponential sum on
$[0,1]$, by its maximum absolute value on $[0,1]$. See proof in
\cite[Theorem 2.20]{erdelyi2017}

\begin{proposition}\label{prop:bernstein}
	Let $\Pcx$ be an exponential sum, $\cv\in\CC^\ell$ and $\xv=\{ x_1,\ldots,x_\ell\}\subset \RR$, then
	$$\|\Pcx'\|_{L^{\infty}([0,1])} \le \Cl{bernstein}\left(108\ell^5 +\sum_{j=1}^{\ell} x_j^2\right)^\frac{1}{2}\|\Pcx\|_{L^{\infty}([0,1])}.$$ 
\end{proposition}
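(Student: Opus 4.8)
The inequality is \cite[Theorem 2.20]{erdelyi2017}; here is the shape of the argument I would follow. Write $P=\Pcx$ and, by homogeneity, normalise $\|P\|_{L^{\infty}([0,1])}=1$. The first, trivial, observation is that differentiation preserves the class: since $P'(t)=\sum_{j=1}^{\ell}(\imath x_j)c_j e^{\imath x_j t}$, we have $P'\in\exs$ with the \emph{same} node set $\xv$, so Turan's inequality, \prettyref{prop:nikolskii} and Salem's inequality all apply to $P'$ as well; the only quantity that needs controlling is the amplification caused by the factors $x_j$.

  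The next step reduces the estimate to a \emph{centred} cluster. Let $\bar x:=\tfrac1\ell\sum_{j=1}^{\ell}x_j$ and factor out the mean frequency, $P(t)=e^{\imath\bar x t}Q(t)$ with $Q(t):=\sum_{j=1}^{\ell}c_j e^{\imath y_j t}\in\exs$, $y_j:=x_j-\bar x$; then $\sum_j y_j=0$, $\sum_j y_j^2=\sum_j x_j^2-\ell\bar x^2\le\sum_j x_j^2$, and $\|Q\|_{L^{\infty}([0,1])}=\|P\|_{L^{\infty}([0,1])}$. From $P'=\imath\bar x\,e^{\imath\bar x t}Q+e^{\imath\bar x t}Q'$ we get
  $$
    \|P'\|_{L^{\infty}([0,1])}\le|\bar x|\,\|P\|_{L^{\infty}([0,1])}+\|Q'\|_{L^{\infty}([0,1])}.
  $$
  Squaring, using $\ell\bar x^2\le\sum_j x_j^2$ and the elementary bound $(a+b)^2\le(1+\delta)a^2+(1+\delta^{-1})b^2$ with $\delta=\kappa_0/(1-\kappa_0)$ (which is why one needs $\kappa_0<1$; the bound then also needs $\ell\ge(1-\kappa_0)^{-1}$, the remaining finitely many small $\ell$ --- in particular $\ell=1$, where $P'=\imath x_1 P$ --- being immediate), the proposition reduces to a Bernstein-type inequality for the centred sum,
  $$
    \|Q'\|_{L^{\infty}([0,1])}^{2}\le\Bigl(c_0\,\ell^{5}+\kappa_0\,\textstyle\sum_{j}y_j^2\Bigr)\|Q\|_{L^{\infty}([0,1])}^{2},
  $$
  with absolute constants $c_0,\kappa_0$, $\kappa_0<1$; the value of the absolute constant in the statement is consistent with, e.g., $c_0=54$, $\kappa_0=\tfrac12$.

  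The heart of the matter --- and the step I expect to be the main obstacle --- is this centred inequality. Its content is that the Bernstein constant of an exponential sum of degree $\ell$ on the \emph{unit} interval grows only \emph{polynomially}, in fact like $\ell^{5/2}$, in $\ell$, on top of the unavoidable ``spread'' term $(\sum_j y_j^2)^{1/2}$. It is this polynomial --- rather than exponential --- dependence that makes the statement nontrivial: passing naively through the classical Bernstein inequality for entire functions of exponential type, or through a Turan-/Remez-type comparison of $\|Q\|_{L^{\infty}([0,1])}$ with the norm of $Q$ on a longer interval, yields bounds that carry a factor exponential in $\ell$ and, worse, fail to remain uniform as the cluster width tends to $0$. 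The two extreme regimes are instructive. When the cluster is narrow, the span of $\{e^{\imath y_j t}\}$ on $[0,1]$ is a small perturbation of the space of algebraic polynomials of degree $\ell-1$, and the centred inequality recovers --- up to a power of $\ell$ --- the classical Markov inequality $\|Q'\|_{L^{\infty}([0,1])}\le2(\ell-1)^2\|Q\|_{L^{\infty}([0,1])}$, which already forces polynomial growth in $\ell$. When the cluster is wide, the dominant term $(\sum_j y_j^2)^{1/2}$ is supplied by the exponential-type Bernstein inequality, once a Remez-type estimate for $\exs$ guarantees that $Q$ does not blow up on a fixed neighbourhood of $[0,1]$. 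Carrying this out uniformly across all cluster widths, with the stated polynomial control in $\ell$, is precisely the work done in \cite{erdelyi2017} by means of sharp Remez-, Nikolskii- and Chebyshev-type inequalities for exponential sums, to which we refer for the complete proof.
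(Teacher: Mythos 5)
Your proposal is essentially the same as the paper's treatment: the paper does not prove \prettyref{prop:bernstein} at all but simply quotes it from \cite[Theorem 2.20]{erdelyi2017}, which is exactly the reference you defer to for the substantive (centred) Bernstein-type inequality. Your preliminary normalisation and frequency-centering reduction is correct but not actually needed, since the cited theorem already covers arbitrary real frequencies $x_j$ with the stated $\bigl(108\ell^5+\sum_j x_j^2\bigr)^{1/2}$ dependence.
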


\subsection{Proof of Theorem \ref{thm:single.cluster}}\label{sec:proof-single-cluster}
Let $\xv$ form $(\Delta,\ell,\tau)_{\TT}$-clustered configuration as in
\prettyref{thm:single.cluster} and assume, without loss of generality,
that $\xv$ is centered around the origin, i.e.
$\min_j x_j+ \max_j x_j=0$, which implies that
$\xv\subset[-\tau\Delta/2,\tau\Delta/2]$.
 
Let
$$
\|P\|_{2,N}:=\left(\sum_{k=0}^N \left|P(k)\right|^2\right)^{1/2}.
$$
Then
\begin{equation}\label{eq.sum.sing}
	\sigma_{\min}(\VV_N(\xv))= \min_{\|\cv\|_2=1}\|P_{\cv,\xv}\|_{2,N}.
\end{equation}

Fix some $\cv \in \CC^\ell$ such that $\|\cv\|_2 = 1$ and $N \le \frac{2 \pi}{\tau \Delta}\le \frac{4 \pi}{\Delta}$.
Combining Corollary \ref{cor.turan} and Proposition \ref{prop.salem}
we obtain
\begin{equation}\label{eq.single.cluster.norm}
   \|P_{\cv,\xv}\|_{L^2\left(\left[0,N\right]\right)} \geq \frac{2}{\pi \ell}\biggl(\frac{N\Delta}{16\pi e}\biggr)^{\ell-1} 
   \|P_{\cv,\xv}\|_{L^2\left(\left[0,\frac{4\pi}{\Delta}\right]\right)}\ge \frac{2}{\pi \ell} \biggl(\frac{N\Delta}{16\pi e}\biggr)^{\ell-1}\sqrt{\Cr{salem}}\|\cv\|_2= \frac{\Cl{temp.1}}{\ell}
   \biggl(\frac{N\Delta}{16\pi e}\biggr)^{\ell-1}.
\end{equation}

At this point, we ``almost'' have the required result, 
what is left is to relate $\|P\|_{2,N}$ and the norm $\|P_{\cv,\xv}\|_{L^2\left(\left[0,N\right]\right)}$,
as follows.

Define
$$
Q_{\cv,\xv,N}(u):= \sum_{j=1}^\ell c_j e^{\imath N u x_j}.
$$
Then $Q_{\cv,\xv,N}(u)=P_{\cv,\xv}(Nu)$ and
$$
\|P_{\cv,\xv}\|_{L^2\left(\left[0,N\right]\right)} = \|Q_{\cv,\xv,N}\|_{L^2\left([0,1]\right)}.
$$

Put
$$
T_{\cv,\xv,N}(u):=Q_{\cv,\xv,N}(u) \widebar{Q}_{\cv,\xv,N}(u) = \left|Q_{\cv,\xv,N}(u)\right|^2.
$$

We have that
\begin{align}
  \label{eq:TandQ}
  \begin{split}
  \|P_{\cv,\xv}\|_{L^2\left(\left[0,N\right]\right)}^2 &= \|T_{\cv,\xv,N} \|_{L^1\left([0,1]\right)}=\int_{0}^1 T_{\cv,\xv,N}(u)du,\\
  \|P_{\cv,\xv}\|_{2,N}^2&=\sum_{k=0}^N T_{\cv,\xv,N}\left({k\over N}\right).
\end{split}
\end{align}

$T_{\cv,\xv,N}= \sum_{j=1}^w b_j e^{iu\lambda_j}$ is an exponential sum of maximal degree $w:=\ell^2-\ell+1$, with the frequencies
satisfying $|\lambda_j|\leq {\tau N\Delta}$. Consequently by \prettyref{prop:bernstein}   
\begin{equation}\label{eq.bernstein.1}
	  	\|T_{\cv,\xv,N}'\|_{L^{\infty}([0,1])} \leq \Cr{bernstein}(108w^{5}+w(\tau N \Delta)^2)^{1/2} \| T_{\cv,\xv,N}\|_{L^{\infty}([0,1])}.
\end{equation}

Approximating the integral by a Riemann sum and using equation \eqref{eq.bernstein.1} we have
\begin{align*}
  \biggl| \int_{0}^1 T_{\cv,\xv,N}(u)du - {1\over N}\sum_{k=0}^N T_{\cv,\xv,N}\left({k\over N}\right)\biggr| 
  &\le {1\over 2N}\|T_{\cv,\xv,N}'\|_{L^{\infty}([0,1])}+{1\over N}T_{\cv,\xv,N}(0) \\
  &\leq {1\over{2N}} \Cr{bernstein}(108w^{5}+w(\tau N \Delta)^2)^{1/2} \| T_{\cv,\xv,N}\|_{L^{\infty}([0,1])} \\
  &\;\;\;\;\;+ {1\over N}T_{\cv,\xv,N}(0)\\ 
  &\leq {1\over N}\biggl({1\over 2} \Cr{bernstein}(108w^{5}+w(\tau N \Delta)^2)^{1/2} + 1 \biggr) \| T_{\cv,\xv,N}\|_{L^{\infty}([0,1])}.
\end{align*}

By assumption $N\le \frac{2 \pi}{\tau \Delta}$, therefore for
an absolute constant $\Cl{AAA}$ 
\begin{align}\label{eq:temp1}
  \biggl| \int_{0}^1 T_{\cv,\xv,N}(u)du - {1\over N}\sum_{k=0}^N T_{\cv,\xv,N}\left({k\over N}\right)\biggr| &\leq 
  	\frac{\Cr{AAA} \ell^{5}}{N} \| T_{\cv,\xv,N}\|_{L^{\infty}([0,1])}.
\end{align}

In addition, using \prettyref{prop:nikolskii} with $p=\infty,q=1$ and $n=w$, we
have
\begin{equation}\label{eq.nikolskii}
  \|T_{\cv,\xv,N}\|_{L^{\infty}([0,1])} \leq \left({\pi w}\over 2\right)^2 \|T_{\cv,\xv,N} \|_{L^1([0,1])}.
\end{equation}

Therefore, substituting \eqref{eq.nikolskii} into  \eqref{eq:temp1}, we obtain
\begin{align}\label{eq.temp.2}
   \biggl| \|T_{\cv,\xv,N} \|_{L^1([0,1])} - {1\over N}\sum_{k=0}^N T_{\cv,\xv,N}\left({k\over N}\right)\biggr|  
   \leq \frac{\Cr{AAA} \ell^{5}}{N} \left({\pi w}\over 2\right)^2 \|T_{\cv,\xv,N} \|_{L^1([0,1])}\leq \Cl{BBB}\frac{\ell^9}{N} \|T_{\cv,\xv,N} \|_{L^1([0,1])}
\end{align}

For $N\ge2\Cr{BBB} \ell^{9}$, we get from \eqref{eq.temp.2} that
\begin{align*}
  {1\over N}\sum_{k=0}^N T_{\cv,\xv,N}\left({k\over N}\right) &\geq\frac{1}{2} \|T_{\cv,\xv,N} \|_{L^1([0,1])}.
\end{align*}

By \eqref{eq:TandQ} we conclude that
\begin{align}\label{eq.integer.cont.norm.const}
   \|P_{\cv,\xv}\|_{2,N}^2 \geq \frac{N}{2}\|P_{\cv,\xv}\|^2_{L^2(\left[0,N\right])}.
\end{align}

Finally substituting \eqref{eq.integer.cont.norm.const} into \eqref{eq.single.cluster.norm} we get that 
\begin{equation}\label{eq:last-with-16pe}
\|P_{\cv,\xv}\|_{2,N} \ge
\frac{\Cr{temp.1}}{\ell}\sqrt{\frac{N}{2}} \biggl(\frac{N\Delta}{16\pi
  e}\biggr)^{\ell-1}.
\end{equation} 

Note that for all $\ell \geq 1$ we have $2^{\ell-1} \geq \ell$.
Since $\cv$ was arbitrary, using the relation \eqref{eq.sum.sing}
completes the proof of Theorem \ref{thm:single.cluster} with
$\Cr{single.cluster.N}(\ell)=2\Cr{BBB} \ell^{9}$,
$\Cr{single.cluster.lower}=32\pi e$ and
$\Cr{single.cluster.mult}=\Cr{temp.1}/\sqrt{2}$. \qed

\subsection{Proof of  \prettyref{thm:main-theorem}}\label{sec:proof-main-thm}
Let $\xv$ form a $\left(\Delta,\theta,s,\ell,\tau\right)_{\TT}$-clustered
configuration. Then there exists an $M$-partition
$\xv=\biguplus_{j=1}^M \mathcal{C}^{(j)}$ such that for each
$j\in\left\{1,\dots,M\right\}$:
\begin{enumerate}
\item $\mathcal{C}^{(j)}$ form a $(\Delta,\ell^{(j)},\tau)_{\TT}$-cluster
  according to \prettyref{def:single.cluster}, where $\ell^{(j)}\leq \ell$;
\item
  $ \dst(x,y) \ge \theta,\quad \forall x\in\mathcal{C}^{(j)},\;\forall
  y \in \xv\setminus\mathcal{C}^{(j)}$.
\end{enumerate}

By \eqref{eq:single-min-lb}, we
have that for each $j=1,\dots,M$
\begin{equation}\label{eq:single.clust.foreach}
\sigma_{\min}(\VV_N(\mathcal{C}^{(j)})) \ge \Cr{single.cluster.mult}
\sqrt{N}\biggl(\frac{N \Delta}{\Cr{single.cluster.lower}}\biggr)^{\ell
  -1}, \quad \Cr{single.cluster.N}(\ell)\le N \le \frac{2\pi}{\tau
  \Delta}.
\end{equation}

We now apply Theorem 2.2 in \cite{batenkov2019}, whose reduced version
reads as follows.

\begin{proposition}\label{prop:singvals-from-laa}
  Let $\xv$ form a $\left(\Delta,\theta,s,\ell,\tau\right)_{\TT}$-clustered
  configuration. Then there exist constants
  $\Cl{multi.cluster.N.theta.union}(\ell),\Cl{multi.cluster.N.delta.union}(\ell)$,
  depending only on $\ell$, such that whenever
  \begin{equation}
    \label{eq:orth.condition}
    \frac{\Cr{multi.cluster.N.theta.union}s}{\theta} \leq N \leq \frac{\Cr{multi.cluster.N.delta.union}}{s\tau\Delta},
  \end{equation}
  we have
  \begin{equation*}
    \sigma_{\min}(\VV_N(\xv)) \geq \frac{1}{2}\min_{j=1,\dots,M}\sigma_{\min}(\VV_N(\mathcal{C}^{(j)})).
  \end{equation*}
\end{proposition}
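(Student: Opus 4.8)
The goal is to finish the proof of \prettyref{thm:main-theorem}: granting \eqref{eq:single.clust.foreach} and \prettyref{prop:singvals-from-laa}, one intersects the two admissible $N$-ranges, applies both, and collects the constants. So the real content is \prettyref{prop:singvals-from-laa} itself (which is \cite[Theorem 2.2]{batenkov2019}); here is how I would prove it. After permuting columns --- which does not affect singular values --- $\VV_N(\xv)$ is the block concatenation $[\VV_N(\mathcal{C}^{(1)})\mid\cdots\mid\VV_N(\mathcal{C}^{(M)})]$. Take thin QR factorizations $\VV_N(\mathcal{C}^{(j)})=\QQ^{(j)}\RRR^{(j)}$, so $\QQ^{(j)}$ has orthonormal columns and, since $\QQ^{(j)*}\QQ^{(j)}=\II$, one has $\sigma_{\min}(\RRR^{(j)})=\sigma_{\min}(\VV_N(\mathcal{C}^{(j)}))$. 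Splitting any $\cv$ conformally as $\cv=(\cv^{(1)},\dots,\cv^{(M)})$ gives $\VV_N(\xv)\cv=\widetilde{\QQ}\,(\RRR^{(1)}\cv^{(1)},\dots,\RRR^{(M)}\cv^{(M)})$ with $\widetilde{\QQ}:=[\QQ^{(1)}\mid\cdots\mid\QQ^{(M)}]$, and hence $\sigma_{\min}(\VV_N(\xv))\ge\sigma_{\min}(\widetilde{\QQ})\cdot\min_j\sigma_{\min}(\VV_N(\mathcal{C}^{(j)}))$. It thus remains to show $\sigma_{\min}(\widetilde{\QQ})\ge\tfrac12$.

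This amounts to $\|\widetilde{\QQ}^{*}\widetilde{\QQ}-\II\|\le\tfrac34$, and since the diagonal blocks of $\widetilde{\QQ}^{*}\widetilde{\QQ}$ are identity matrices, a block-Gershgorin estimate reduces it to showing $\sum_{k\ne j}\|\QQ^{(j)*}\QQ^{(k)}\|\le\tfrac34$ for every $j$, i.e. pairwise near-orthogonality of the cluster column subspaces; this is where $N\ge\Cr{multi.cluster.N.theta.union}(\ell)\,s/\theta$ enters. Expanding the nodes of $\mathcal{C}^{(j)}$ about the cluster center $\xi_j$ shows that $\mathrm{col}(\VV_N(\mathcal{C}^{(j)}))$ is spanned, up to a perturbation of size $O_\ell(N\tau\Delta)$ --- negligible by the upper bound $N\le\Cr{multi.cluster.N.delta.union}(\ell)/(s\tau\Delta)$ --- by the discrete-monomial vectors $\vv_{j,a}=(n^{a}e^{\imath n\xi_j})_{n=0}^{N}$, $0\le a<\ell^{(j)}$. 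After normalization (one has $\|\vv_{j,a}\|_2\asymp N^{a+1/2}$), the cross-cluster inner products $\langle\widehat{\vv}_{j,a},\widehat{\vv}_{k,b}\rangle$ are, up to $\ell$-dependent factors, combinations of frequency-derivatives of the Dirichlet kernel $D_N(\phi)=\sum_{n=0}^{N}e^{\imath n\phi}$ at $\phi=\xi_j-\xi_k$, and since $\dst(\xi_j,\xi_k)\gtrsim\theta$ each of them is $\lesssim_\ell 1/(N\theta)$. Summing over $a,b<\ell$ and over the at most $s$ other clusters gives $\sum_{k\ne j}\|\QQ^{(j)*}\QQ^{(k)}\|\lesssim_\ell s/(N\theta)\le\tfrac34$ once $\Cr{multi.cluster.N.theta.union}(\ell)$ is taken large enough.

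The crux, and the main obstacle, is exactly this near-orthogonality bound. It cannot be obtained from the raw Gram blocks $\VV_N(\mathcal{C}^{(j)})^{*}\VV_N(\mathcal{C}^{(k)})$: their entries are Dirichlet sums of size $\Theta(1/\theta)$, far above $\tfrac34$ when $\theta\ll1$, while the $\QQ^{(j)}$ orthonormalize severely ill-conditioned blocks, so the conditioning has to be tracked through a basis adapted to it (the divided-difference / discrete-orthogonal-polynomial basis used above), in which the weakly represented directions of one cluster also couple weakly to the other clusters --- a hierarchical cancellation. In the present paper this step is delegated to \cite[Theorem 2.2]{batenkov2019}, and all that remains is to note that its geometric hypothesis coincides verbatim with \prettyref{def:partial.cluster}, so $\Cr{multi.cluster.N.theta.union}(\ell)$ and $\Cr{multi.cluster.N.delta.union}(\ell)$ are taken from there. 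Finally \prettyref{thm:main-theorem} follows by bookkeeping: choose $\Cr{multi.cluster.N.theta}(\ell),\Cr{multi.cluster.N.delta}(\ell)$ so that the range $[\Cr{multi.cluster.N.theta}(\ell)s/\theta,\ \Cr{multi.cluster.N.delta}(\ell)/(s\tau\Delta)]$ is contained in both the single-cluster range $[\Cr{single.cluster.N}(\ell),\ 2\pi/(\tau\Delta)]$ and \eqref{eq:orth.condition}, chain \eqref{eq:single.clust.foreach} with \prettyref{prop:singvals-from-laa}, and absorb the factor $\tfrac12$ into $\Cr{multi.cluster.mult}:=\Cr{single.cluster.mult}/2$, keeping $\Cr{multi.cluster.lower.in}=\Cr{single.cluster.lower}=32\pi e$.
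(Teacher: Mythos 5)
The paper itself does not prove this proposition --- the sentence immediately preceding it reads ``We now apply Theorem 2.2 in \cite{batenkov2019}, whose reduced version reads as follows,'' and that citation \emph{is} the proof. You correctly identify this, and your QR reduction is a sound and clean way to unpack what is being invoked: writing $\VV_N(\mathcal{C}^{(j)})=\QQ^{(j)}\RRR^{(j)}$ and $\widetilde{\QQ}=[\QQ^{(1)}\mid\cdots\mid\QQ^{(M)}]$, the inequality $\sigma_{\min}(\VV_N(\xv))\ge\sigma_{\min}(\widetilde{\QQ})\,\min_j\sigma_{\min}(\VV_N(\mathcal{C}^{(j)}))$ is correct, and block Gershgorin with identity diagonal blocks does reduce $\sigma_{\min}(\widetilde{\QQ})\ge\frac12$ to $\sum_{k\ne j}\|\QQ^{(j)*}\QQ^{(k)}\|\le\frac34$; this is exactly the ``near-orthogonality of cluster column subspaces'' phrasing the paper itself uses when describing \cite{batenkov2019}.

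The gap is in the near-orthogonality sketch. You assert that the column space of $\VV_N(\mathcal{C}^{(j)})$ is spanned ``up to a perturbation of size $O_\ell(N\tau\Delta)$'' by the \emph{unscaled} monomials $\vv_{j,a}=(n^{a}e^{\imath n\xi_j})_{n=0}^{N}$, and then treat this as negligible. But a small additive perturbation of the matrix does not give a small perturbation of its column space when the block is as ill-conditioned as here: the Wedin angle bound carries a $1/\sigma_{\min}$ factor, and $\sigma_{\min}(\VV_N(\mathcal{C}^{(j)}))\sim\sqrt{N}(N\Delta)^{\ell^{(j)}-1}$ is tiny in precisely the regime of interest, inflating the naive estimate by $(N\Delta)^{-(\ell-1)}$. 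To close the step one must first pass to the $\Delta$-scaled basis $\{\Delta^a\vv_{j,a}\}$, verify that the resulting $\ell^{(j)}\times\ell^{(j)}$ coefficient matrix (a Vandermonde in $\delta_{j,k}/\Delta\in[-\tau,\tau]$ with unit separation) is well-conditioned uniformly in $\Delta$, and only then compare the Taylor remainder $O_\ell((N\tau\Delta)^{\ell^{(j)}}\sqrt{N})$ against the smallest singular value $\sim c(\ell)\,\Delta^{\ell^{(j)}-1}N^{\ell^{(j)}-1/2}$ of the scaled monomial block; the same scaling must then be carried into the Dirichlet-kernel estimate for $\QQ^{(j)*}\QQ^{(k)}$. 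You plainly know this is the crux (you mention the divided-difference basis and ``hierarchical cancellation''), but your $\vv_{j,a}$ are unscaled, so as written the bridge from the Taylor expansion to the orthonormal $\QQ^{(j)}$ is missing. Since you ultimately delegate this step to \cite{batenkov2019} anyway --- exactly as the paper does --- the conclusion is reached; it is the intermediate sketch that is not yet a proof. Your closing bookkeeping for \prettyref{thm:main-theorem} matches the paper's.
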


Since $\theta\leq\pi$ and $s\geq 1$, both lower bounds on $N$ in
\eqref{eq:single.clust.foreach} and \eqref{eq:orth.condition} are
satisfied whenever
$N\theta \geq s \max\left(\Cr{single.cluster.N}(\ell)\pi,
  \Cr{multi.cluster.N.theta.union} \right)$. On the other hand,
$N\tau\Delta < \min(2\pi,\Cr{multi.cluster.N.delta.union})$ implies
the corresponding upper bounds on $N$. This completes the proof of
\prettyref{thm:main-theorem} with
$\Cr{multi.cluster.mult} = \Cr{single.cluster.mult}/2$,
$\Cr{multi.cluster.N.theta}(\ell)=\max(\Cr{multi.cluster.N.theta.union},\Cr{single.cluster.N}(\ell)\pi)$,
$\Cr{multi.cluster.N.delta}(\ell) =
\min(2\pi,\Cr{multi.cluster.N.delta.union}(\ell))$ and
$\Cr{multi.cluster.lower.in} = \Cr{single.cluster.lower}=32\pi e$. \qed

\subsection{Proof of Theorem \ref{thm:prolate}}\label{sec.prolate.proof}
\begin{proof}
  Let $\theta \geq \frac{\Cr{multi.cluster.N.theta}s}{2}$ and
  $s\tau\Delta \leq \frac{\Cr{multi.cluster.N.delta}}{2}$, where the
  constants
  $\Cr{multi.cluster.N.theta}=\Cr{multi.cluster.N.theta}(\ell)$,
  $\Cr{multi.cluster.N.delta}=\Cr{multi.cluster.N.delta}(\ell)$, are
  the same as in Theorem \ref{thm:main-theorem}. Now let $\xv$ form a
  $\left(\Delta,\theta,s,\ell,\tau\right)_{\RR}$-clustered
  configuration.

  For any $N\in\NN$ and
  $j\in\{1,\ldots,s\}$ put $\xi_j = \xi_{j,N}:=\frac{x_j}{N}$
  and $\xiv=\xiv^{(N)}:= \{\xi_1,\ldots,\xi_s\}$, and define the
  following shifted in frequency and normalized Vandermonde like
  matrix
	\begin{equation*}\label{eq.shifted.vand}
		\widetilde{\VV}_N\left(\xiv\right):=\frac{1}{\sqrt{2N}} \bigl[ \exp\left(\imath k \xi_j\right)\bigr]_{k=-N,\dots,N}^{j=1,\dots,s}\;.
	\end{equation*}
	We have 
	\begin{equation*}
  		\frac{1}{2} \int_{-1}^1 \exp(\ii\w t) d\w
  			=\lim_{N\to\infty}\frac{1}{2N}\sum_{k=-N}^N \exp\left( \imath \frac{k}{N} t\right).
	\end{equation*}
	Consequently $\GG(\xv) = \lim_{N\to\infty} \widetilde{\VV}_N(\xiv)^H \widetilde{\VV}_N(\xiv)$,
	and so by continuity of eigenvalues \cite[Section 2.4.9]{horn_matrix_2012} we have that
	\begin{equation}\label{eq.V.G}
		\lambda_{\min}\left(\GG(\xv)\right) = \lim_{N\to\infty} \lambda_{\min}\left( \widetilde{\VV}_N(\xiv)^H \widetilde{\VV}_N(\xiv)\right)
		=\lim_{N\to\infty} \sigma^2_{\min}\left(\widetilde{\VV}_N(\xiv)\right).
	\end{equation}
	
	For $N$ large enough we have $\{\xi_{1,N},\ldots,\xi_{s,N}\} \subset (-\pi, \pi]$ and we can write $\widetilde{\VV}_N\left(\xiv\right)$ as
	\begin{equation}\label{eq.V.Vtilde}
		\widetilde{\VV}_N\left(\xiv\right) = \frac{1}{\sqrt{2N}} \VV_{2N}(\xiv)\cdot \diag\left(e^{-\ii N \xi_1},\ldots,e^{-\ii N \xi_s}\right),
	\end{equation}
	where $\diag\left(e^{-\ii N \xi_1},\ldots,e^{-\ii N \xi_s}\right)$ is the $s \times s$ 
	diagonal matrix with $\left(e^{-\ii N \xi_1},\ldots,e^{-\ii N \xi_s}\right)$ as its main diagonal. 
	By \eqref{eq.V.Vtilde} clearly 
	\begin{equation}\label{eq.prolate.sig.Vtilde.V}
		\sigma_{\min}\left(\widetilde{\VV}_N\left(\xiv\right)\right)= \frac{1}{\sqrt{2N}} \sigma_{\min}(\VV_{2N}(\xiv)).
	\end{equation}
	One can validate that for each $N$, $\{\xi_{j,N}\}_{j=1}^s$ form a
        $\left(\frac{\Delta}{N},\frac{\theta}{N},s,\ell,\tau\right)_{\TT}$-clustered
        configuration and, on the other hand, the assumptions
        $\theta \geq \frac{\Cr{multi.cluster.N.theta}s}{2}$ and
        $s\tau\Delta \leq \frac{\Cr{multi.cluster.N.delta}}{2}$ imply
        that
        $\frac{\Cr{multi.cluster.N.theta}s}{\left(\frac{\theta}{N}\right)}
        \le 2N \le
        \frac{\Cr{multi.cluster.N.delta}}{s\tau\left(\frac{\Delta}{N}\right)}$.
        Now we apply Theorem \ref{thm:main-theorem} and obtain
        $\sigma_{\min}\left(\VV_{2N}(\xiv)\right) \ge \Cr{multi.cluster.mult}\sqrt{2N}
        \left(\frac{2\Delta}{\Cr{multi.cluster.lower.in}}
        \right)^{\ell-1}$ and therefore using
        \eqref{eq.prolate.sig.Vtilde.V} we have
	\begin{equation*}
          \sigma_{\min}\left(\widetilde{\VV}_N\left(\xiv\right)\right) \ge \Cr{multi.cluster.mult} \left(\frac{2\Delta}{\Cr{multi.cluster.lower.in}} \right)^{\ell-1}.
	\end{equation*}
	Finally using \eqref{eq.V.G} we get that
        $\lambda_{\min}\left(\GG(\xv)\right)\ge
        \Cr{multi.cluster.mult}^2
        \left(\frac{2\Delta}{\Cr{multi.cluster.lower.in}}
        \right)^{2\ell-2}$.  This proves Theorem \ref{thm:prolate}
        with $\Cr{multi.cluster.prolate.lower}=\Cr{multi.cluster.mult}^2$,
        $\Cr{multi.cluster.lower.O}=\frac{\Cr{multi.cluster.lower.in}}{2}=16\pi e$,
        $\Cr{multi.cluster.O.theta}=\frac{\Cr{multi.cluster.N.theta}}{2}$
        and
        $\Cr{multi.cluster.O.delta} =
        \frac{\Cr{multi.cluster.N.delta}}{2}$.
      \end{proof}

\section{Entire spectrum}\label{sec:entire-spectrum}
      As mentioned in the Introduction, our proofs can be extended to
      provide scaling for \emph{all the singular values} of $\VV_N$
      (resp. eigenvalues of $\GG$.)
      
      For a single cluster, we have the following more general result
      from which \prettyref{thm:single.cluster} immediately follows as
      a corollary.

\begin{theorem}\label{thm:single.cluster.all}
  Let $\xv$ form a $(\Delta,\ell,\tau)_{\TT}$-clustered
  configuration. Denote the singular values of $\VV_N(\xv)$ by
  $$
  \sigma_1 \geq \sigma_2 \geq \dots \geq \sigma_\ell\equiv\sigma_{\min}.
  $$
  Then for any $N$ satisfying
  $\Cr{single.cluster.N}(\ell)\le N \le \frac{2\pi}{\tau \Delta}$,
  there holds
\begin{equation}
  \label{eq:single-cluster-all-bound}
  \sigma_m(\VV_N(\xv)) \geq   \Cr{single.cluster.mult}\sqrt{N}\biggl(\frac{N
    \Delta}{\Cr{single.cluster.lower}}\biggr)^{m -1},\qquad m=1,2,\dots,\ell.
\end{equation}
All the constants are the same as in \prettyref{thm:single.cluster}.
\end{theorem}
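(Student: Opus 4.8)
The plan is to mimic the proof of \prettyref{thm:single.cluster} almost verbatim, replacing the single coefficient vector $\cv$ with an $m$-dimensional subspace and invoking the Courant--Fischer min-max characterization of singular values. Recall that
$$
\sigma_m(\VV_N(\xv)) = \max_{\substack{S\subseteq\CC^\ell\\ \dim S = m}}\ \min_{\substack{\cv\in S\\ \|\cv\|_2=1}} \|P_{\cv,\xv}\|_{2,N},
$$
so it suffices to exhibit \emph{one} $m$-dimensional subspace $S\subseteq\CC^\ell$ on which the lower bound $\|P_{\cv,\xv}\|_{2,N} \ge \Cr{single.cluster.mult}\sqrt N (N\Delta/\Cr{single.cluster.lower})^{m-1}$ holds for all unit $\cv\in S$. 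The natural candidate is $S = \Span\{e_1,\dots,e_m\}$, i.e. the coefficient vectors supported on the first $m$ coordinates, which corresponds to exponential sums $P_{\cv,\xv'}$ with $\xv'=\{x_1,\dots,x_m\}$ a sub-cluster of degree $m$; this sub-cluster still has minimal separation $\Delta$ and extent $\le\tau\Delta$, hence is a $(\Delta,m,\tau)_{\TT}$-cluster. Actually any choice of $m$ out of the $\ell$ nodes works equally well, so one may simply fix the first $m$.

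The key steps, in order, are: (i) fix $m\in\{1,\dots,\ell\}$ and restrict attention to $\cv\in\CC^\ell$ supported on coordinates $1,\dots,m$ with $\|\cv\|_2=1$, so that $P_{\cv,\xv}=P_{\cv',\xv'}$ lies in $\exs[m]$; (ii) apply \prettyref{cor.turan} and \prettyref{prop.salem} with $\ell$ replaced by $m$ (noting $N\le\frac{2\pi}{\tau\Delta}\le\frac{4\pi}{\Delta}$ and that $\xv'$ is a $(\Delta,m,\tau)_{\TT}$-cluster) to obtain, exactly as in \eqref{eq.single.cluster.norm}, the bound $\|P_{\cv',\xv'}\|_{L^2([0,N])} \ge \frac{\Cr{temp.1}}{m}(N\Delta/16\pi e)^{m-1}$; (iii) pass from the continuous $L^2$ norm to the discrete norm $\|P\|_{2,N}$ via the Bernstein/Nikolskii Riemann-sum argument of \eqref{eq.bernstein.1}--\eqref{eq.integer.cont.norm.const}, which applies verbatim since $T_{\cv',\xv',N}$ has degree $\le m^2-m+1\le \ell^2-\ell+1=w$ and frequencies bounded by $\tau N\Delta$, so the very same threshold $N\ge 2\Cr{BBB}\ell^9 = \Cr{single.cluster.N}(\ell)$ and the same constant $\Cr{AAA},\Cr{BBB}$ suffice (one uses the cruder bound in terms of $\ell$, not $m$, so no constants change); (iv) combine to get $\|P_{\cv,\xv}\|_{2,N}\ge \frac{\Cr{temp.1}}{m}\sqrt{N/2}(N\Delta/16\pi e)^{m-1} \ge \Cr{single.cluster.mult}\sqrt N(N\Delta/32\pi e)^{m-1}$, using $2^{m-1}\ge m$; (v) since this holds for every unit $\cv$ in the fixed $m$-dimensional subspace, Courant--Fischer gives \eqref{eq:single-cluster-all-bound}.

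There is essentially no obstacle: every ingredient is already in place, and the only point requiring a word of care is making sure the Bernstein/Nikolskii step in (iii) does not degrade the constant when $m<\ell$. This is handled by simply retaining the bound $w=\ell^2-\ell+1$ (valid since $m\le\ell$) throughout that block, so the threshold $\Cr{single.cluster.N}(\ell)=2\Cr{BBB}\ell^9$ and the prefactor $\Cr{single.cluster.mult}$ are literally the same as in \prettyref{thm:single.cluster}, which is exactly what the statement asserts. The case $m=1$ reduces to the Salem-type estimate alone and is consistent with the trivial bound $\sigma_1\ge\sigma_{\min}$ anyway.

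Finally, \prettyref{thm:single.cluster} is recovered by taking $m=\ell$ in \eqref{eq:single-cluster-all-bound}, so no separate argument is needed. An analogous statement for the full spectrum of $\GG(\xv)$, and for the multi-cluster matrix $\VV_N(\xv)$, follows by combining this theorem with the limiting argument of \prettyref{sec.prolate.proof} and with \prettyref{prop:singvals-from-laa} (whose full version in \cite{batenkov2019} already controls all singular values, not merely the smallest); we omit the details.
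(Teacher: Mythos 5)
Your proposal is correct and follows essentially the same route as the paper: apply Courant--Fischer to reduce $\sigma_m$ to the minimum of $\|P_{\cv,\xv_m}\|_{2,N}$ over unit $\cv$ supported on the first $m$ coordinates, then rerun the single-cluster argument with $m$ in place of $\ell$. Your extra remark in step (iii) --- keeping $w=\ell^2-\ell+1$ rather than $m^2-m+1$ in the Bernstein/Nikolskii block so that the threshold $\Cr{single.cluster.N}(\ell)$ and the prefactor $\Cr{single.cluster.mult}$ remain independent of $m$ --- is a helpful clarification of what the paper's terse ``repeat the computation, replacing $\ell$ with $m$'' must implicitly mean for the stated constants to hold.
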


\begin{proof}[Proof outline]
  Fix $m=1,2,\dots,\ell$, and let $\cv\in\CC^m$ with $\|\cv\|_2=1$ be
  arbitrary.  Furthermore, denote by $\xv_m$ the ordered set
  $\{x_1,\dots,x_m\}\subseteq \xv$. By the Courant-Fischer minmax
  principle we have, extending \eqref{eq.sum.sing}, that
  \begin{equation}
    \label{eq:sv-via-Pnorm-all}
    \sigma_{m}(\VV_N(\xv)) \geq \min_{\cv\in\CC^m, \; \|\cv\|_2=1} \|P_{\cv,\xv_m}\|_{2,N},\qquad m=1,2,\dots,\ell.
  \end{equation}
  Now we can repeat the computation from Section
  \ref{sec:proof-single-cluster}, replacing $\ell$ with $m$ and $\xv$
  with $\xv_m$.
\end{proof}

In order to provide appropriate extensions of
\prettyref{thm:main-theorem} and \prettyref{thm:prolate}, recall the
construction of the $M$-partition of $\xv$ from
\prettyref{sec:proof-main-thm}. Now for each $m=1,2,\dots,\ell$ let
$q_m$ be the number of clusters among the $\mathcal{C}^{(j)}$ of
multiplicity at least $m$:
\begin{equation}\label{eq:qm-def}
  q_m:=\#\{1\leq j\leq M:  m \leq \ell^{(j)}\}.
\end{equation}

The extension of \prettyref{thm:main-theorem} to include all the
singular values is the following.

\begin{theorem}\label{thm:all-sing-v}
  Let $\xv$ form a $\left(\Delta,\theta,s,\ell,\tau\right)_{\TT}$ as
  in \prettyref{thm:main-theorem}. Then for $N$ as in
  \prettyref{thm:main-theorem}, for each $m=1,2,\dots,\ell$ there are
  precisely $q_m$ singular values of $\VV_N(\xv)$ bounded from below by
  \begin{align}
      \label{eq.multi.cluster.bound.all}
      \Cr{multi.cluster.mult}\sqrt{N} \left(\frac{N \Delta}{\Cr{multi.cluster.lower.in}} \right)^{m-1}.
  \end{align}

\end{theorem}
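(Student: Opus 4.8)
The plan is to combine the single-cluster result for all singular values (\prettyref{thm:single.cluster.all}) with the near-orthogonality of cluster subspaces (\prettyref{prop:singvals-from-laa}), exactly mirroring the proof of \prettyref{thm:main-theorem} but tracking the full singular spectrum. Recall the $M$-partition $\xv=\biguplus_{j=1}^M \mathcal{C}^{(j)}$, where $\mathcal{C}^{(j)}$ forms a $(\Delta,\ell^{(j)},\tau)_{\TT}$-cluster and the clusters are mutually $\theta$-separated. First I would observe that the columns of $\VV_N(\xv)$ split into $M$ groups, one per cluster. For each $j$, \prettyref{thm:single.cluster.all} gives, for every $m\le\ell^{(j)}$, at least $m$ singular values of the block $\VV_N(\mathcal{C}^{(j)})$ bounded below by $\Cr{single.cluster.mult}\sqrt{N}\bigl(N\Delta/\Cr{single.cluster.lower}\bigr)^{m-1}$; in particular $\VV_N(\mathcal{C}^{(j)})$ has $\ell^{(j)}$ singular values, the $m$-th of which (counting from the top) is at least the displayed quantity whenever $m\le\ell^{(j)}$.

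Next I would invoke a strengthened form of \prettyref{prop:singvals-from-laa}. The key structural fact behind that proposition (Theorem 2.2 in \cite{batenkov2019}) is not merely a bound on $\sigma_{\min}$, but that the Gram matrix $\VV_N(\xv)^H\VV_N(\xv)$ is, after suitable normalization, a small perturbation (in operator norm, relative to the diagonal blocks) of the block-diagonal matrix $\bigoplus_j \VV_N(\mathcal{C}^{(j)})^H\VV_N(\mathcal{C}^{(j)})$; equivalently, the column subspace of each cluster block is nearly orthogonal to the span of the others. Under the hypothesis \eqref{eq:orth.condition}, this yields $\sigma_k(\VV_N(\xv)) \ge \tfrac12\,\sigma_k^{\mathrm{blk}}$ for every index $k$, where $\sigma_1^{\mathrm{blk}}\ge\sigma_2^{\mathrm{blk}}\ge\cdots$ is the multiset obtained by merging and sorting the singular values of all the diagonal blocks $\VV_N(\mathcal{C}^{(j)})$. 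This is the step I expect to be the main obstacle: \prettyref{prop:singvals-from-laa} as quoted only states the $\sigma_{\min}$ version, so one must either re-derive the block-perturbation statement from \cite{batenkov2019} or cite the appropriate part of it; the factor $\tfrac12$ and the constants $\Cr{multi.cluster.N.theta.union}(\ell),\Cr{multi.cluster.N.delta.union}(\ell)$ carry over verbatim since the proof there controls the full perturbation, not just its smallest eigenvalue.

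With the interlacing-type bound $\sigma_k(\VV_N(\xv))\ge\tfrac12\sigma_k^{\mathrm{blk}}$ in hand, the counting argument is elementary. Fix $m\in\{1,\dots,\ell\}$. Each cluster $\mathcal{C}^{(j)}$ with $\ell^{(j)}\ge m$ contributes (via \prettyref{thm:single.cluster.all} applied to that block) at least $m$ of its singular values that are $\ge \Cr{single.cluster.mult}\sqrt{N}\bigl(N\Delta/\Cr{single.cluster.lower}\bigr)^{m-1}$; summing over the $q_m$ such clusters, the merged multiset $\{\sigma_k^{\mathrm{blk}}\}$ contains at least $q_m\cdot m \ge q_m$ entries that are this large, so $\VV_N(\xv)$ has at least $q_m$ singular values $\ge \tfrac12\Cr{single.cluster.mult}\sqrt{N}\bigl(N\Delta/(2\Cr{single.cluster.lower})\bigr)^{m-1} = \Cr{multi.cluster.mult}\sqrt{N}\bigl(N\Delta/\Cr{multi.cluster.lower.in}\bigr)^{m-1}$ (using $2^{m-1}\ge1$ to absorb the extra factor of $2$ into $\Cr{multi.cluster.lower.in}=32\pi e$ exactly as in \prettyref{thm:main-theorem}). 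For the upper side — that there are \emph{precisely} $q_m$ such, i.e.\ no more than $q_m$ — I would note that $\VV_N(\xv)$ has exactly $\sum_j \ell^{(j)}=s$ nonzero singular values, and that the complementary bound from \eqref{eq:orth.paper.bound} (or \prettyref{thm:single.cluster.all} read as an upper bound on $\sigma_{m+1}$ of each block, which follows from the quoted upper bound in \eqref{eq:orth.paper.bound} applied per cluster) shows each cluster $\mathcal{C}^{(j)}$ contributes at most $m-1$ singular values of order exceeding the threshold when $\ell^{(j)}<m$ is false in the opposite direction; more carefully, a cluster with $\ell^{(j)}\le m-1$ has all its singular values, while a cluster with $\ell^{(j)}\ge m$ has its $m$-th largest of the stated order and its smaller ones decaying faster, so only $q_m$ clusters can supply a singular value at this scale, and the near-orthogonality (now used in the reverse direction, $\sigma_k(\VV_N(\xv))\le 2\sigma_k^{\mathrm{blk}}$) transfers the count. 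Assembling these two inequalities gives exactly $q_m$, completing the proof with the same constants as \prettyref{thm:main-theorem}.
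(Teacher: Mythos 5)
Your proof is correct and is essentially the paper's own: it reduces to the per-cluster bound of \prettyref{thm:single.cluster.all} and then transfers it via the full version of \prettyref{prop:singvals-from-laa} (Theorem 2.2 of \cite{batenkov2019}, which gives $\sigma_j \geq \tfrac12\tilde\sigma_j$ for \emph{all} $j$, not only $\sigma_{\min}$), so the ``strengthened form'' you flag as the main obstacle is exactly what the paper quotes verbatim and no re-derivation is needed. One small arithmetic slip: the factor $\tfrac12$ from near-orthogonality is absorbed into $\Cr{multi.cluster.mult}=\Cr{single.cluster.mult}/2$ while $\Cr{multi.cluster.lower.in}=\Cr{single.cluster.lower}=32\pi e$ is left unchanged, so your displayed ``equality'' with $2\Cr{single.cluster.lower}$ inside the parentheses is not an identity, and the proposed absorption $\tfrac12\,a^{m-1}\geq (a/2)^{m-1}$ in fact fails at $m=1$.
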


To prove this result, we repeat the proof from
\prettyref{sec:proof-main-thm}, replacing
\prettyref{prop:singvals-from-laa} with its ``full'' version from
\cite{batenkov2019} which reads as follows.

\begin{proposition}[Theorem 2.2 in \cite{batenkov2019}]
  Let $\xv$ form a
  $\left(\Delta,\theta,s,\ell,\tau\right)_{\TT}$-clustered
  configuration.  Let
  $\tilde{\sigma}_1\geq \tilde{\sigma}_2\geq\dots\geq\tilde{\sigma}_s$
  denote the union of all the singular values of the matrices
  $\VV_N(\mathcal{C}^{(j)})$ in non-increasing order, and
  $\sigma_1\geq\dots\geq \sigma_s$ denote the singular values of
  $\VV_N(\xv)$.  Then whenever \eqref{eq:orth.condition} holds, we
  have
  \begin{equation*}
    \sigma_j \geq \frac{1}{2}\tilde{\sigma}_j,\quad j=1,\dots,s.
  \end{equation*}
\end{proposition}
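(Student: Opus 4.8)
The plan is to reduce the eigenvalue comparison to a single positive‑semidefinite operator inequality between the Gram matrix of $\VV_N(\xv)$ and its cluster‑block‑diagonal part, and then to prove that inequality by showing the column subspaces attached to distinct clusters are pairwise nearly orthogonal. Concretely, I would first set $\mathbf{G}:=\VV_N(\xv)^H\VV_N(\xv)$ and let $\mathbf{D}$ be the block‑diagonal matrix whose diagonal blocks are $\VV_N(\mathcal{C}^{(j)})^H\VV_N(\mathcal{C}^{(j)})$, relative to the $M$‑partition $\xv=\biguplus_{j=1}^M\mathcal{C}^{(j)}$ from \prettyref{sec:proof-main-thm} (after reordering columns so that each cluster's columns are contiguous, which affects neither the $\sigma_j$ nor the $\tilde\sigma_j$). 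The eigenvalues of $\mathbf{D}$ are exactly $\{\tilde\sigma_1^2,\dots,\tilde\sigma_s^2\}$ and those of $\mathbf{G}$ are $\{\sigma_1^2,\dots,\sigma_s^2\}$, so by monotonicity of the maps $\lambda_j(\cdot)$ under the Loewner order it suffices to prove $\mathbf{G}\succeq\tfrac14\mathbf{D}$; this at once gives $\sigma_j^2=\lambda_j(\mathbf{G})\ge\tfrac14\lambda_j(\mathbf{D})=\tfrac14\tilde\sigma_j^2$ for every $j=1,\dots,s$.

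Next I would take a thin QR factorization $\VV_N(\mathcal{C}^{(j)})=\mathbf{U}_j\mathbf{R}_j$ of each cluster block, with $\mathbf{U}_j$ having orthonormal columns spanning the cluster's column space and $\mathbf{R}_j$ square and invertible. Then $\VV_N(\xv)=\hat{\mathbf{U}}\,\mathbf{R}$ with $\hat{\mathbf{U}}=[\mathbf{U}_1\,|\,\cdots\,|\,\mathbf{U}_M]$ and $\mathbf{R}$ block‑diagonal with blocks $\mathbf{R}_j$, hence $\mathbf{G}=\mathbf{R}^H(\hat{\mathbf{U}}^H\hat{\mathbf{U}})\mathbf{R}$ while $\mathbf{D}=\mathbf{R}^H\mathbf{R}$. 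Thus $\mathbf{G}\succeq\tfrac14\mathbf{D}$ follows from $\hat{\mathbf{U}}^H\hat{\mathbf{U}}\succeq\tfrac14\II$, and since $\hat{\mathbf{U}}^H\hat{\mathbf{U}}$ has identity diagonal blocks this in turn follows from $\|\hat{\mathbf{U}}^H\hat{\mathbf{U}}-\II\|_2\le\tfrac34$. Because $\hat{\mathbf{U}}^H\hat{\mathbf{U}}-\II$ is block‑Hermitian with vanishing diagonal blocks and $(i,j)$‑block $\mathbf{U}_i^H\mathbf{U}_j$, a Schur‑test estimate bounds its operator norm by $\max_{1\le i\le M}\sum_{j\neq i}\|\mathbf{U}_i^H\mathbf{U}_j\|_2$, so the whole statement reduces to the geometric claim $\max_i\sum_{j\neq i}\|\mathbf{U}_i^H\mathbf{U}_j\|_2\le\tfrac34$: the cluster column subspaces have summably small pairwise coherences.

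The remaining step — estimating $\|\mathbf{U}_i^H\mathbf{U}_j\|_2$, the cosine of the smallest principal angle between $\operatorname{col}\VV_N(\mathcal{C}^{(i)})$ and $\operatorname{col}\VV_N(\mathcal{C}^{(j)})$ — is where the real work, and the main obstacle, lies. Entrywise $\VV_N(\mathcal{C}^{(i)})^H\VV_N(\mathcal{C}^{(j)})$ consists of Dirichlet sums $\sum_{k=0}^N e^{\imath k(x-y)}$ with $x\in\mathcal{C}^{(i)},\,y\in\mathcal{C}^{(j)}$, and by condition~(2) of \prettyref{def:partial.cluster} each $d(x,y)\ge\theta$; moreover the upper bound $N\le\Cr{multi.cluster.N.delta.union}(\ell)/(s\tau\Delta)$ keeps $N\tau\Delta$ small, so each cluster's arc is tight, there is no wrap‑around overlap, and all cross‑differences $x-y$ are comparable to the separation $d(\bar x_i,\bar x_j)$ of the cluster centers. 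The difficulty is that passing naively from these entrywise bounds to an operator‑norm bound on $\mathbf{U}_i^H\mathbf{U}_j$ costs a factor $\sigma_{\min}(\VV_N(\mathcal{C}^{(i)}))^{-1}\sigma_{\min}(\VV_N(\mathcal{C}^{(j)}))^{-1}$, which is catastrophic in the sub‑Rayleigh regime; the resolution is to invoke the approximation‑theoretic localization of exponential sums on a small cluster (Turán/Bernstein‑type inequalities, cf.\ Corollary~\ref{cor.turan} and \prettyref{prop:bernstein}) to show directly, uniformly in those singular values, that any unit vector of $\operatorname{col}\VV_N(\mathcal{C}^{(i)})$ has $\ell^2$‑overlap $O\!\big(1/(N\,d(\bar x_i,\bar x_j))\big)$ with the span of Dirichlet kernels centered at the nodes of $\mathcal{C}^{(j)}$. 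Granting $\|\mathbf{U}_i^H\mathbf{U}_j\|_2\le C_0/(N\,d(\bar x_i,\bar x_j))$, I would then sum over $j\neq i$: with at most $s$ clusters and each cross term at most $C_0/(N\theta)$, one gets $\sum_{j\neq i}\|\mathbf{U}_i^H\mathbf{U}_j\|_2=O(s/(N\theta))$ (a harmonic‑type refinement using the $\theta$‑separation of the centers sharpens this to $O(\log s/(N\theta))$), which is $\le\tfrac34$ precisely once $N\theta\ge\Cr{multi.cluster.N.theta.union}(\ell)\,s$. Chaining the three reductions then yields $\sigma_j\ge\tfrac12\tilde\sigma_j$ for all $j$ under \eqref{eq:orth.condition}.
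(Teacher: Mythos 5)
Your reduction scheme is sound as far as it goes: writing $\mathbf{G}=\VV_N(\xv)^H\VV_N(\xv)$, taking thin QR factorizations of the cluster blocks so that $\mathbf{G}=\mathbf{R}^H(\hat{\mathbf{U}}^H\hat{\mathbf{U}})\mathbf{R}$ and $\mathbf{D}=\mathbf{R}^H\mathbf{R}$, and invoking Loewner monotonicity does correctly reduce $\sigma_j\geq\tfrac12\tilde\sigma_j$ to the single estimate $\hat{\mathbf{U}}^H\hat{\mathbf{U}}\succeq\tfrac14\II$, i.e.\ to near-orthogonality of the cluster column subspaces, which is exactly the mechanism the paper attributes to \cite{batenkov2019} (note that the paper itself does not reprove this proposition; it imports it verbatim as Theorem 2.2 of that reference). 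The block Schur-type bound $\|\hat{\mathbf{U}}^H\hat{\mathbf{U}}-\II\|_2\leq\max_i\sum_{j\neq i}\|\mathbf{U}_i^H\mathbf{U}_j\|_2$ and the final summation over clusters under $N\theta\gtrsim s$ are also fine.

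The genuine gap is the step you yourself flag and then simply ``grant'': the coherence bound $\|\mathbf{U}_i^H\mathbf{U}_j\|_2\leq C_0/(N\,d(\bar x_i,\bar x_j))$. This is not a technicality to be waved at with Tur\'an/Bernstein inequalities --- it is the entire analytic content of the cited theorem. Your proposed route (bounding the overlap of a unit vector of $\operatorname{col}\VV_N(\mathcal{C}^{(i)})$ with ``the span of Dirichlet kernels centered at the nodes of $\mathcal{C}^{(j)}$'') does not obviously escape the obstacle you identified: the target cluster's Vandermonde block is itself ill-conditioned in the sub-Rayleigh regime, so passing from entrywise Dirichlet-sum bounds $O(1/(N\theta))$ to a bound on $\mathbf{U}_i^H\mathbf{U}_j$ again threatens to cost factors of $\sigma_{\min}(\VV_N(\mathcal{C}^{(j)}))^{-1}$, which is precisely what must be avoided. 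What is actually needed (and what the proof in \cite{batenkov2019} supplies) is a \emph{well-conditioned} explicit basis for each cluster subspace --- e.g.\ divided-difference/derivative-of-Dirichlet-kernel vectors --- together with quantitative bounds showing (i) that this basis is uniformly close to orthonormal for $N\tau\Delta$ small, with constants depending only on $\ell$, and (ii) that the cross-correlations between such basis vectors from $\theta$-separated clusters are $O_\ell(1/(N\theta))$. Without carrying out (i) and (ii) --- which is also where the $\ell$-dependence of the constants in \eqref{eq:orth.condition} originates, something your sketch never accounts for --- the argument assumes the conclusion's key quantitative ingredient rather than proving it, so the proposal does not constitute a proof of the proposition.
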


As for \prettyref{thm:prolate}, we can define the numbers $q_m$ in a
similar manner with respect to the clustered configurations on $\RR$,
and then we have the following.

\begin{theorem}\label{thm:all-eig-prol}
  For any $\xv$ forming
  a $\left(\Delta,\theta,s,\ell,\tau\right)_{\RR}$-clustered
  configuration as in \prettyref{thm:prolate}, for each
  $m=1,2,\dots,\ell$ there are precisely
  $q_m$ eigenvalues of $\GG\left(\xv\right)$ bounded from below by
  $$
  \Cr{multi.cluster.prolate.lower}\left(\frac{\Delta}{\Cr{multi.cluster.lower.O}} \right)^{2(m-1)}.
  $$
\end{theorem}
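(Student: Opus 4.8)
The plan is to obtain \prettyref{thm:all-eig-prol} from \prettyref{thm:all-sing-v} by the same frequency-rescaling-and-continuity argument that \prettyref{sec.prolate.proof} uses to deduce \prettyref{thm:prolate} from \prettyref{thm:main-theorem}; the only extra care needed is in tracking the combinatorial data $q_m$ through the limit. Fix $\xv$ forming a $\left(\Delta,\theta,s,\ell,\tau\right)_{\RR}$-clustered configuration with $\theta\geq\Cr{multi.cluster.O.theta}s$ and $s\tau\Delta\leq\Cr{multi.cluster.O.delta}$, and for $N\in\NN$ set $\xi_{j,N}:=x_j/N$, $\xiv^{(N)}:=\{\xi_{1,N},\dots,\xi_{s,N}\}$, and $\widetilde{\VV}_N(\xiv^{(N)})$ as in \eqref{eq.shifted.vand}. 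For all sufficiently large $N$ one has $\xiv^{(N)}\subset(-\pi,\pi]$ and $\xiv^{(N)}$ is a $\left(\Delta/N,\theta/N,s,\ell,\tau\right)_{\TT}$-clustered configuration whose $M$-partition (in the sense of \prettyref{sec:proof-main-thm}) is the image under $x\mapsto x/N$ of the corresponding $\RR$-partition of $\xv$; in particular the multiplicities $\ell^{(j)}$, hence the numbers $q_m$ of \eqref{eq:qm-def}, agree for $\xv$ and $\xiv^{(N)}$ for all large $N$. Moreover, exactly as in \prettyref{sec.prolate.proof}, the hypotheses on $\theta$ and $s\tau\Delta$ force $2N$ into the admissible range of \prettyref{thm:main-theorem}, hence of \prettyref{thm:all-sing-v}, for the rescaled parameters $\Delta\mapsto\Delta/N$, $\theta\mapsto\theta/N$.

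Next I would apply \prettyref{thm:all-sing-v} to $\VV_{2N}(\xiv^{(N)})$: for each $m=1,\dots,\ell$, precisely $q_m$ of its singular values are $\geq\Cr{multi.cluster.mult}\sqrt{2N}\bigl(2\Delta/\Cr{multi.cluster.lower.in}\bigr)^{m-1}$, using $2N\cdot(\Delta/N)=2\Delta$. Since $\widetilde{\VV}_N(\xiv^{(N)})=\tfrac1{\sqrt{2N}}\VV_{2N}(\xiv^{(N)})D$ with $D$ unitary diagonal (cf. \eqref{eq.V.Vtilde}), the singular values of $\widetilde{\VV}_N(\xiv^{(N)})$ are $\tfrac1{\sqrt{2N}}$ times those of $\VV_{2N}(\xiv^{(N)})$, so for each $m$ precisely $q_m$ eigenvalues of $\widetilde{\VV}_N(\xiv^{(N)})^H\widetilde{\VV}_N(\xiv^{(N)})$ exceed $\Cr{multi.cluster.mult}^2\bigl(2\Delta/\Cr{multi.cluster.lower.in}\bigr)^{2(m-1)}$. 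Finally, letting $N\to\infty$ and using \eqref{eq.V.G} together with continuity of eigenvalues transfers this count to $\GG(\xv)$, which gives precisely $q_m$ eigenvalues of $\GG(\xv)$ bounded below by $\Cr{multi.cluster.mult}^2\bigl(2\Delta/\Cr{multi.cluster.lower.in}\bigr)^{2(m-1)}=\Cr{multi.cluster.prolate.lower}\bigl(\Delta/\Cr{multi.cluster.lower.O}\bigr)^{2(m-1)}$, with $\Cr{multi.cluster.prolate.lower}=\Cr{multi.cluster.mult}^2$ and $\Cr{multi.cluster.lower.O}=\Cr{multi.cluster.lower.in}/2=16\pi e$, as in \prettyref{sec.prolate.proof}.

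The genuinely delicate step — everything else being a transcription of \prettyref{sec.prolate.proof} — is passing the word \emph{precisely} through the limit: a count of the form $\#\{j:\lambda_j\geq\beta\}=q$ is not automatically limit-stable, since the strict inequality that keeps the remaining singular values of $\widetilde{\VV}_N(\xiv^{(N)})$ below the threshold at each finite $N$ could collapse to a non-strict one as $N\to\infty$. The ``at least $q_m$'' direction is harmless: the appropriate ordered singular value of $\widetilde{\VV}_N(\xiv^{(N)})$ stays $\geq\beta$ for all large $N$, so its limit — an eigenvalue of $\GG(\xv)$ — is $\geq\beta^2$. For ``at most $q_m$'' one must carry through the limit the quantitative \emph{upper} estimates on the small singular values that underlie the ``precisely'' clause of \prettyref{thm:all-sing-v} — these descend, through the full statement of Theorem~2.2 of \cite{batenkov2019} and single-cluster upper bounds of the type \eqref{eq:orth.paper.bound}, from the fact that the singular values in question sit at a strictly finer $\Delta$-scale than $\beta$ — and check that, uniformly over the large $N$, they keep the eigenvalues that must lie below $\beta^2$ strictly below it with a gap independent of $N$, so that strictness survives. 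This uniform gap estimate is the one computation I would write out in full.
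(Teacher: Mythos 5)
Your route is exactly the paper's: its proof of Theorem~\ref{thm:all-eig-prol} is a single sentence asking the reader to repeat the proof of Theorem~\ref{thm:prolate} with Theorem~\ref{thm:all-sing-v} in place of Theorem~\ref{thm:main-theorem}, observing that \eqref{eq.V.G} and \eqref{eq.prolate.sig.Vtilde.V} hold for all the singular values; your first two paragraphs are that transcription, including the (correct, and worth saying explicitly) remark that the dilation $x\mapsto x/N$ preserves the $M$-partition and hence the numbers $q_m$. Where you depart is the third paragraph. The concern about passing ``precisely'' through the limit would be real if one read the quantifier literally, but a literal reading is already untenable upstream: in Theorem~\ref{thm:all-sing-v} the threshold decreases in $m$ (since $N\Delta/(32\pi e)<1$) while $q_m$ also decreases in $m$, so ``exactly $q_m$ singular values above the $m$-th threshold, for every $m$'' is internally inconsistent unless all $q_m$ coincide; and the paper's proof of Theorem~\ref{thm:all-sing-v} rests solely on the one-sided bound $\sigma_j\geq\tfrac12\tilde{\sigma}_j$, establishing nothing in the ``at most'' direction. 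The intended content is the lower-bound accounting $\sigma_{q_1+\cdots+q_m}\geq\beta_m$ — i.e.\ $q_m$ eigenvalues certifiable at the $m$-th threshold — and under that reading your limit argument is already complete: the uniform-gap estimate you defer is neither needed nor attempted (nor provable from the paper's ingredients), so you can safely drop it.
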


The proof is identical to that of \prettyref{thm:prolate}, noting 
that \eqref{eq.V.G} and \eqref{eq.prolate.sig.Vtilde.V} hold for all
the singular values, and using \prettyref{thm:all-sing-v} in place of \prettyref{thm:main-theorem}.

\bibliographystyle{plainurl}
\bibliography{bib}	
\end{document}